\newcommand{\R}{\mathbb{R}}
\newcommand{\inr}[1]{\left\langle #1 \right\rangle}
\newcommand{\E}{\mathbb{E}}
\newtheorem{theorem}{Theorem}[section]
\newtheorem{proposition}[theorem]{Proposition}
\newtheorem{lemma}[theorem]{Lemma}
\newtheorem{corollary}[theorem]{Corollary}
\theoremstyle{definition}
\newtheorem{definition}[theorem]{Definition}
\newtheorem{assumption}[theorem]{Assumption}
\newtheorem{remark}[theorem]{Remark}
\newtheorem{question}[theorem]{Question}
\renewcommand{\P}{\mathbb{P}}
\newcommand{\proj}{\mathrm{proj}}
\newcommand{\OD}{\mathcal{O}}
\newcommand{\ODN}{\mathcal{M}}
\newcommand{\cdim}{k_\ast(T)}
\renewcommand{\tilde}{\widetilde}
\numberwithin{equation}{section}
\newcommand{\eps}{\varepsilon}
\begin{document}

\title{Random embeddings with an almost Gaussian distortion}

\author{Daniel Bartl}
\address{Department of Mathematics, University of Vienna, Austria}
\email{daniel.bartl@univie.ac.at}
\author{Shahar Mendelson}
\address{Department of Mathematics, University of Vienna, Austria}
\email{shahar.mendelson@gmail.com}
%\keywords{}
\date{\today}
%\subjclass[2010]{}

\begin{abstract}
Let $X$ be a symmetric, isotropic random vector in $\mathbb{R}^m$ and let $X_1...,X_n$ be independent copies of $X$. We show that under mild assumptions on $\|X\|_2$ (a suitable thin-shell bound) and on the tail-decay of the marginals $\langle X,u\rangle$, the random matrix $A$, whose columns are $X_i/\sqrt{m}$ exhibits a Gaussian-like behaviour in the following sense: for an arbitrary subset of $T\subset \mathbb{R}^n$, the distortion $\sup_{t \in T} | \|At\|_2^2 - \|t\|_2^2 |$ is almost the same as if $A$ were a Gaussian matrix.

A simple outcome of our result is that if $X$ is a symmetric, isotropic, log-concave random vector and $n \leq m \leq c_1(\alpha)n^\alpha$ for some $\alpha>1$, then with high probability, the extremal singular values of $A$ satisfy the optimal estimate: $1-c_2(\alpha) \sqrt{n/m} \leq \lambda_{\rm min} \leq \lambda_{\rm max} \leq 1+c_2(\alpha) \sqrt{n/m}$.
\end{abstract}

\maketitle
\setcounter{equation}{0}
\setcounter{tocdepth}{2}
%\tableofcontents

\section{Introduction} \label{sec:intro}
Random embeddings have been studied extensively over the last few decades as a way of exposing a set's structure. Intuitively, a set $T \subset \R^n$ can be embedded in $\R^m$ without distorting most of its metric structure (e.g., almost preserving mutual distances between most of its points) as long as $m$ is at least as large as $T$'s effective dimension. The meaning of `effective dimension' is rather vague at this point, and various features of the set could serve as its effective dimension; our choice will be clarified in what follows.

The Johnson-Lindenstrauss Lemma \cite{MR737400} was one of the first examples to exhibit this general phenomenon. It states that a subset $T$ of a Hilbert space, that is of cardinality $n$, can be embedded in $\ell_2^m$ for $m=c(\eps)\log n$, with the embedding distorting mutual distances between all pairs of points in $T$ by at most a multiplicative factor of $1 \pm \eps$. Moreover, the embedding is random---in the original proof of the Johnson-Lindenstrauss Lemma the embedding was a normalized orthogonal projection onto a subspace selected according to the Haar measure on a Grassmann manifold of the right dimension. That choice of a random embedding is by no means unique, and over the years many different random ensembles have been shown to be, with high probability, embeddings that satisfy the Johnson-Lindenstrauss Lemma.

However, the dimension $m =c(\eps) \log n$ is, at times, loose. It is a rather coarse upper estimate on any reasonable notion of effective dimension: it does not distinguish between two sets that have the same cardinality but might have totally different metric structures.

The body of work that studies various notions of random embeddings is substantial and we make no attempt to survey it here. 
Rather, we focus on one particular way of embedding a subset of $\R^n$: a random ensemble $A=\frac{1}{\sqrt{m}}\tilde{A}$, where $\tilde{A}:\R^n \to \R^m$ is a random matrix whose columns are independent copies of a symmetric random vector $X$ in $\R^m$. 
We assume throughout that $X$ is symmetric and isotropic, i.e., that $X$ and $-X$ have the same distribution and that its covariance is the identity (implying, in particular, that for every $u \in \R^m$, $\E \inr{X,u}^2=\|u\|_2^2$). 
Thus, for every $t \in \R^n$,
$\E \|At\|_2^2 = \|t\|_2^2$, which explains the normalization of $1/\sqrt{m}$ used in the definition of $A$.

Our goal is to study when a satisfactory estimate on
\begin{equation} \label{eq:uniform-conc}
\sup_{t \in T} \left| \|At\|_2^2-\|t\|_2^2 \right|
\end{equation}
is possible for an arbitrary $T \subset \R^n$. While an upper estimate on \eqref{eq:uniform-conc} seems a rather weak notion of structure preservation, when applied to $T=V-V = \{ x - y : x,y \in V\}$ for $V \subset \R^n$, \eqref{eq:uniform-conc} captures the worst distortion of distances in $V$ caused by the embedding $A$.

It is important to stress the significant difference between the case we focus on here---when $A$ has $n$ independent copies of a random vector in $\R^m$ as columns, and the more standard scenario of a random embedding, in which the random ensemble has $m$ independent copies of a random vector in $\R^n$ as rows. The fact that the columns of $A$ are independent, combined with our goal of a nontrivial estimate for any $T \subset \R^n$, automatically leads to an obstruction that has to be overcome by an additional assumption on $X$. Indeed, one of the possible sets $T$ is very simple---the standard basis $\{e_1,...,e_n\}$. For such a set $T$ we have
\begin{equation} \label{eq:max-n-thin-shell}
\sup_{t \in T} \left| \|At\|_2^2-\|t\|_2^2 \right| = \max_{1 \leq i \leq n} \left| \|Ae_i\|_2^2 - 1 \right| =\max_{1 \leq i \leq n} \left| \frac{\|X_i\|_2^2}{m}-1 \right|,
\end{equation}
implying that at the very least,
\begin{equation} \label{eq:max-n-thin-shell-1}
\max_{1 \leq i \leq n} \left| \frac{\|X_i\|_2^2}{m}-1 \right|
\end{equation}
has to be well-behaved with reasonable probability. That can only happen if $\|X\|_2^2$ exhibits a sufficiently strong concentration around its mean---a so-called \emph{thin-shell bound}. Thin-shell bounds are known for a variety of random vectors (for estimates in the log-concave case, see, e.g. \cite{MR2285748,MR2846382,MR4244847} and Corollary \ref{cor:log-concave}, below).

Because it is unreasonable to consider embeddings that fail when the given set contains the standard basis, it makes sense to focus only on ensembles for which \eqref{eq:max-n-thin-shell-1} is small. Thus, the key question is what happens when this obstruction is resolved:

\begin{question} \label{qu:main}
Assume that $X$ is well-behaved, in the sense that with probability at least $1-\gamma$,
$$
\max_{1 \leq i \leq n} \left| \frac{\|X_i\|_2^2}{m}-1 \right| \leq \delta.
$$
When is it possible to control \eqref{eq:uniform-conc} in a satisfactory way for every $T \subset \R^n$?
\end{question}

To find what `satisfactory' should mean here, let us turn to the obvious example, when $A$ is generated by independent copies of the standard Gaussian random vector $G$; that is, the random vector whose coordinates are independent, standard Gaussian random variables.

\begin{remark}
In what follows we will not specify the dimension of each Gaussian random vector we encounter; that will be clear from the context in each instance.
\end{remark}

The way a Gaussian random matrix acts on an arbitrary set $T$ is well understood. To describe the features that are relevant in the context of Question \ref{qu:main} we require two important notions. The first one is the \emph{Gaussian mean-width} associated with $T \subset \R^n$; that is,
$$
\ell_*(T)=\E \sup_{t \in T} |\inr{G,t}| = \E \sup_{t \in T} \left|\sum_{i=1}^n g_i t_i \right|.
$$
To explain the reasoning for its name, note that
\[ \ell_\ast(T)\sim \sqrt{n} \int_{S^{n-1}} \sup_{t\in T} |\inr{\theta,t}| \, d\sigma(\theta) \]
where $\sigma$ denotes the Haar measure on the Euclidean unit sphere $S^{n-1}$; in other words, $\ell_\ast(T)$ is equivalent to the normalized average over all directional widths $\sup_{t\in T} |\inr{\theta,t}|$ when assigning equal weight to all directions $\theta\in S^{n-1}$.

The second notion is the so-called \emph{Dvoretzky-Milman} dimension (or \emph{critical dimension}). Here and in what follows denote
$$
d_T=\sup_{t \in T} \|t\|_2,
$$
and set
$$
k_*(T)=\left(\frac{\ell_*(T)}{d_T}\right)^2
$$
to be the Dvoretzky-Milman dimension. For reasons that will be clarified immediately, it will serve as our choice of the set's effective dimension.
More information on these notions and their role in the study of \emph{Asymptotic Geometric Analysis} can be found in \cite{MR3331351}.

One of the main achievements of Asymptotic Geometric Analysis is Milman's version of Dvoretzky's Theorem \cite{MR0293374}. For the Gaussian formulation we use here see, e.g., \cite{MR1036275} or \cite{MR3331351}. 
\begin{theorem} \label{thm:DM}
Let $T$ be a convex body---that is, a convex, bounded, centrally symmetric subset of $\R^n$ with a nonempty interior. Set $k_*=k_*(T)$ and $\eps>0$, and let $m=c(\eps) k_*$ for a well chosen constant $c(\eps)$ that depends only on $\eps$.
Let $A:\R^n \to \R^m$ be the normalized Gaussian matrix.
Then, with probability at least $1-2\exp(-c_1 k_*)$,
$$
(1-\eps) \frac{\ell_*(T)}{\sqrt{m}} B_2^m \subset A T \subset (1+\eps) \frac{\ell_*(T)}{\sqrt{m}} B_2^m,
$$
where $B_2^m$ is the Euclidean unit ball in $\R^m$.
\end{theorem}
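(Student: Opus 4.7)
The strategy is to reduce the two-sided inclusion to uniform control of the support function of $AT$. Since $T$ is a symmetric convex body, the inclusions
\[ (1-\eps)\frac{\ell_\ast(T)}{\sqrt{m}} B_2^m \;\subset\; AT \;\subset\; (1+\eps)\frac{\ell_\ast(T)}{\sqrt{m}} B_2^m \]
are equivalent to the uniform sandwich
\[ (1-\eps)\frac{\ell_\ast(T)}{\sqrt{m}} \;\leq\; \inf_{\theta\in S^{m-1}} h_{AT}(\theta) \;\leq\; \sup_{\theta\in S^{m-1}} h_{AT}(\theta) \;\leq\; (1+\eps)\frac{\ell_\ast(T)}{\sqrt{m}}, \]
where $h_{AT}(\theta)=\sup_{t\in T}\inr{At,\theta}=h_T(A^\top\theta)$. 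The central observation is that, by rotation invariance of the Gaussian, $\sqrt{m}\,A^\top\theta$ is a standard Gaussian vector in $\R^n$ for each fixed $\theta\in S^{m-1}$, so $\E h_{AT}(\theta)=\ell_\ast(T)/\sqrt m$ is precisely the target radius.

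The first step is pointwise concentration: since $h_T:\R^n\to\R$ is $d_T$-Lipschitz, Gaussian concentration applied to $h_T(G/\sqrt m)$ gives, for every fixed $\theta\in S^{m-1}$,
\[ \P\!\left(\left|h_{AT}(\theta)-\frac{\ell_\ast(T)}{\sqrt m}\right| > \frac{\eps}{2}\cdot\frac{\ell_\ast(T)}{\sqrt m}\right) \;\leq\; 2\exp(-c_0\eps^2 k_\ast(T)). \]
The second step is a covering argument: pick an $\eta$-net $\mathcal{N}\subset S^{m-1}$ of cardinality at most $(3/\eta)^m$ and take a union bound, so that with probability at least $1-2(3/\eta)^m\exp(-c_0\eps^2 k_\ast)$ the pointwise estimate holds simultaneously on $\mathcal N$; this falls below $1-2\exp(-c_1 k_\ast)$ provided $m\leq c(\eps)k_\ast$ with a suitable $c(\eps)$ of order $\eps^2/\log(1/\eps)$. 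The third step passes from $\mathcal{N}$ to all of $S^{m-1}$ by an absorption trick: since $h_{AT}$ is subadditive and positively homogeneous, it is Lipschitz in $\theta$ with constant $M:=\sup_{\theta\in S^{m-1}}h_{AT}(\theta)$, so replacing $\theta$ by the nearest $\theta'\in\mathcal N$ yields $M\leq (1-\eta)^{-1}\sup_{\theta'\in\mathcal N}h_{AT}(\theta')$, while the matching lower bound $\inf_\theta h_{AT}(\theta)\geq\inf_{\theta'\in\mathcal N}h_{AT}(\theta')-\eta M$ controls the inner ball. Taking $\eta$ proportional to $\eps$ absorbs the correction into the final $(1\pm\eps)$ factors.

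The crux of the argument is the tension visible in the covering step: the net has size exponential in $m$, while the pointwise concentration decays only exponentially in $k_\ast$. The theorem holds precisely in the regime in which concentration defeats covering, namely $m\lesssim \eps^2 k_\ast/\log(1/\eps)$, and this trade-off is the origin of the scaling $m=c(\eps)k_\ast$ in the statement, with the Dvoretzky-Milman dimension $k_\ast(T)$ marking the sharp threshold at which the balance breaks.
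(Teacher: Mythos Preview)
The paper does not prove Theorem~\ref{thm:DM}; it is quoted as a classical background result, attributed to Milman with references to \cite{MR1036275} and \cite{MR3331351} for the Gaussian formulation. Your argument is correct and is exactly the standard proof found in those references: Gaussian Lipschitz concentration applied to the $d_T$-Lipschitz function $h_T$ gives the pointwise bound on $h_{AT}(\theta)=h_T(A^\top\theta)$ with failure probability $2\exp(-c\eps^2 k_\ast)$; a union bound over an $\eta$-net of $S^{m-1}$ of cardinality $\exp(c'm)$ forces the restriction $m\leq c(\eps)k_\ast$; and the absorption step $M\leq(1-\eta)^{-1}\max_{\theta'\in\mathcal N}h_{AT}(\theta')$ extends the control to the full sphere. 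There is no proof in the paper to compare against, but your sketch matches the textbook treatment.
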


At the same time, the Gaussian random matrix $A$ satisfies a uniform concentration estimate that holds for every $T \subset \R^n$. Indeed, one can show that for $u>0$, with probability at least $1-2\exp(-c_0u^2 k_*)$,
\begin{equation} \label{eq:gaussian-func-Bern}
\sup_{t \in T} \left| \|At\|_2^2 - \|t\|_2^2 \right| \leq c_1 \left(u d_T \frac{\ell_*(T)}{\sqrt{m}} + u^2 \frac{\ell_*^2(T)}{m}\right).
\end{equation}
A more general version of \eqref{eq:gaussian-func-Bern} can be found in \cite{MR3565471}.

The performance of the Gaussian random operator will be our benchmark. Thus, at least at the intuitive level, one should expect that when $m \lesssim k_*(T)$ the random embedding $A$ erases all of $T$'s metric structure, as is the case for the Gaussian ensemble. Indeed, if the complexity of two convex bodies, $T_1$ and $T_2$, is roughly the same---in the sense that the two have the same Euclidean diameter and mean-width---, the typical images $A T_1$ and $A T_2$ are almost the same: by Theorem \ref{thm:DM} both are close to a Euclidean ball of essentially the same radius.

However, once the dimension of the image space increases beyond $k_*(T)$, \eqref{eq:gaussian-func-Bern} implies that the Gaussian ensemble preserves some of $T$'s structure. Therefore, it is natural to ask whether a version of \eqref{eq:gaussian-func-Bern} is true for a random mapping $A$ whose independent columns are distributed according to a general symmetric, isotropic random vector rather than as the Gaussian one, and the interesting case is when $m\geq \cdim$.

Our main result is that, if the random ensemble generated by $X$ can overcome the trivial obstruction and deal with $T=\{e_1,...,e_n\}$, only a mild additional assumption on $X$ suffices to ensure that an almost Gaussian estimate holds in \eqref{eq:uniform-conc} for an arbitrary $T \subset \R^n$.

A hint on the features that $X$ needs to satisfy can be found in standard properties of the Gaussian vector in $\R^m$: in addition to a thin-shell bound---which the Gaussian random vector satisfies with parameters $\delta =c(\beta) \sqrt{\frac{\log n}{m}}$ and $\gamma \sim n^{-\beta}$ (e.g., by Bernstein's inequality)---, the Gaussian vector satisfies $L_p$-$L_2$ norm equivalences in the following sense: there is an absolute constant $C$ such that for every $x \in \R^m$ and every $p \geq 1$,
$$
\|\inr{G,x}\|_{L_p} \leq C \sqrt{p} \|\inr{G,x}\|_{L_2}.
$$

It turns out that a combination of a thin-shell bound and a mild tail decay on linear functionals suffices to (almost) recover the Gaussian estimate on \eqref{eq:uniform-conc}. To be more accurate, we require the following:

\begin{definition} \label{def:property-of-X}
We say that the symmetric, isotropic random vector $X$ in $\R^m$ is \emph{suitable} with constants $\delta\in[0,1]$, $\gamma \in (0,1)$, $\alpha \in (0,2]$, $R$ and $L$  if the following holds:

\begin{enumerate}
\item $X$ satisfies that with probability at least $1-\gamma$,
$$
\max_{1 \leq i \leq n} \left| \frac{\|X_i\|_2^2}{m} - 1 \right| \leq \delta.
$$
\item
for every $2 \leq p \leq R\log n$,
$$
\|\inr{X,x}\|_{L_p} \leq L p^{1/\alpha} \|\inr{X,x}\|_{L_2} \ \ \text{for every } \ x \in \R^m.
$$
\end{enumerate}
\end{definition}

Part $(1)$ of Definition \ref{def:property-of-X} is the bare-minimum: the thin-shell bound that is needed to remove the trivial obstruction caused by $\{e_1,...,e_n\}$. Part $(2)$ is an $L_p$-$L_2$ norm equivalence with constant $L$, corresponding to a $\psi_\alpha$ behaviour for some $0<\alpha\leq 2$ (rather than the $\psi_2$ behaviour exhibited by the Gaussian random vector)---but only up to $p =R \log n$.
The larger $R$ is, the wider the range of $L_p$ norms for which the norm equivalence holds, and that implies that linear functionals $\inr{X,x}$ have faster tail decays.

With this notion set in place, let us formulate our main result. Recall that $A:\R^n \to \R^m$ is a random matrix whose columns are $n$ normalized independent copies of the symmetric, isotropic random vector $X \in \R^m$.

\begin{theorem} \label{thm:main-intro}
Let $\beta \geq 1$ and assume that $X$ is suitable in the sense of Definition \ref{def:property-of-X}, with constants $\delta$, $\gamma$, $\alpha$, $R=R(\beta)$ and $L$.
Consider $T \subset \R^n$ that satisfies $k_*(T) \geq \log n$. Then with probability at least $1-\gamma-2\exp(-c_0k_*(T))-n^{-\beta}$,
$$
\sup_{t \in T} \left| \|At\|_2^2-\|t\|_2^2 \right| 
\leq 2\delta d_T^2+c(L,\alpha,\beta) \left(d_T\frac{\ell_*(T)}{\sqrt{m}} + \frac{\ell_*^2(T)}{m}\right)\log^{2/\alpha}\left(\frac{en}{k_*(T)}\right).
$$
\end{theorem}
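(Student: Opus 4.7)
The natural first step is to split
\[
\|At\|_2^2 - \|t\|_2^2 \;=\; \sum_{i=1}^n t_i^2\Bigl(\tfrac{\|X_i\|_2^2}{m}-1\Bigr) \;+\; \tfrac{1}{m}\sum_{i \neq j} t_i t_j \inr{X_i, X_j}.
\]
On the event of Definition \ref{def:property-of-X}(1), of probability at least $1-\gamma$, the diagonal sum is bounded pointwise by $\delta \|t\|_2^2 \leq \delta d_T^2$, which accounts for the leading $2\delta d_T^2$ in the conclusion (with room to absorb a lower-order contribution produced below). The real task is to control the off-diagonal chaos
\[
Q_t \;:=\; \tfrac{1}{m} \sum_{i \neq j} t_i t_j \inr{X_i, X_j}
\]
uniformly in $t \in T$, on an event of probability at least $1 - 2\exp(-c_0 k_\ast(T)) - n^{-\beta}$.

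I would handle $Q_t$ by a standard decoupling inequality (de la Pe\~na--Montgomery-Smith) to reduce, up to an absolute constant, to the decoupled bilinear form $\widetilde Q_t = \tfrac{1}{m}\inr{W_t, V_t}$, where $W_t = \sum_i t_i X_i$ and $V_t = \sum_i t_i Y_i$ for an independent copy $(Y_i)$ of $(X_i)$. The increment splits as
\[
\widetilde Q_t - \widetilde Q_s \;=\; \tfrac{1}{m}\inr{W_t, V_{t-s}} + \tfrac{1}{m}\inr{W_{t-s}, V_s},
\]
and, conditional on $(X_i)$, each inner product is a weighted sum of independent marginals of the form $\inr{X,W_t/\|W_t\|_2}$. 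Part (2) of Definition \ref{def:property-of-X} then yields, for $p \leq R \log n$, a conditional Bernstein-type bound
\[
\bigl(\E_{Y}\, |\widetilde Q_t - \widetilde Q_s|^p\bigr)^{1/p} \;\lesssim\; \tfrac{L\|W_t\|_2}{m}\bigl(\sqrt p\,\|t-s\|_2 + p^{1/\alpha}\|t-s\|_\infty\bigr),
\]
and a second application of the norm equivalence controls $\|W_t\|_2/\sqrt m$ by $\|t\|_2 \leq d_T$ up to a $\psi_\alpha$-fluctuation. The cut-off $p \leq R\log n$ costs only $n^{-\beta}$ in probability provided $R$ is chosen as a suitable function of $\beta$.

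With this mixed increment estimate in hand, the heart of the proof is a generic chaining argument. The subgaussian ($\sqrt p$) piece of the tail yields the Gaussian-width contribution $d_T \ell_\ast(T)/\sqrt m + \ell_\ast^2(T)/m$ via Talagrand's $\gamma_2$-functional and the standard majorization $\gamma_2(T,\|\cdot\|_2) \lesssim \ell_\ast(T)$. The $\psi_\alpha$-piece is absorbed by truncating the admissible chain at the Sudakov level where $\log N(T,\eps) \sim k_\ast(T)$: beyond that level only a small number of coarse approximations remain, and taking $p \sim k_\ast(T) + \beta \log n$ at the top suffices. The multiplicative cost of the $\psi_\alpha$ inflation is of order $p^{2/\alpha}$, which, under the hypothesis $k_\ast(T) \geq \log n$, simplifies to $\log^{2/\alpha}(en/k_\ast(T))$, the effective number of doubling levels separating the ambient entropy $\sim n$ from the working entropy $\sim k_\ast(T)$.

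The main obstacle is sharpness of this logarithmic factor. A naive chaining would produce $(\log n)^{2/\alpha}$, so the delicate bookkeeping is to apply the $\psi_\alpha$ losses only in the coarse top portion of the chain (whose length is $\log(en/k_\ast(T))$) while running the finer portion through the subgaussian half of the increment alone. The assumption $k_\ast(T) \geq \log n$ is used precisely to ensure that a union bound over a top-level net of cardinality $\exp(c k_\ast(T))$ comfortably absorbs the target failure probability $n^{-\beta}$; without it, the two scales would interfere and the log factor would degrade.
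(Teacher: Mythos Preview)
Your diagonal/off-diagonal split and the treatment of the diagonal term are exactly as in the paper. The off-diagonal argument, however, has two genuine gaps.

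\textbf{Circularity in the multiplier.} After decoupling you condition on $(X_i)$ and bound increments of $\tfrac{1}{m}\inr{W_t,V_{t-s}}$ via the $L_p$--$L_2$ equivalence. Your increment estimate carries the factor $\|W_t\|_2/\sqrt{m}$, and you say this is ``$\leq d_T$ up to a $\psi_\alpha$-fluctuation''. That is true pointwise in $t$, but the chaining requires it \emph{uniformly} over $T$ (at least along all approximants $\pi_s t$), and $\sup_{t\in T}\|W_t\|_2^2/m$ is precisely $\sup_{t\in T}\|At\|_2^2$---the quantity you are trying to control. The paper breaks this circularity not by a direct multiplier bound but by reducing everything to a structural quantity on \emph{sparse} vectors,
\[
\OD_{2^s}=\E_\eta \max_{x\in\mathcal{S}_{I_\eta,2^s}}\max_{y\in\mathcal{S}_{I_\eta^c,2^s}} \inr{Ax,Ay},
\]
and then closing a self-bounding loop only at the sparse level: $(\ODN_{2^s})^2\leq 2+4\OD_{2^s}$ together with $\OD_{2^s}\lesssim \sqrt{2^s/m}\log^{1/\alpha}(en/2^s)\,\ODN_{2^s}$ (Theorem \ref{thm:random.matrix.satisfies.assumption}). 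This is what makes the argument go through; a direct conditional chaining as you describe does not.

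\textbf{The logarithmic factor.} Your accounting for $\log^{2/\alpha}(en/k_\ast(T))$ is not right. You write that the $\psi_\alpha$ inflation is of order $p^{2/\alpha}$ with $p\sim k_\ast(T)+\beta\log n$; under $k_\ast(T)\geq\log n$ that gives $k_\ast(T)^{2/\alpha}$, not $\log^{2/\alpha}(en/k_\ast(T))$, and the ``number of coarse levels'' heuristic does not produce this quantity either. In the paper the factor arises from a completely different mechanism: when bounding $\OD_{2^s}$ one controls the $2^{s}$-th order statistic of $n$ marginals via Markov's inequality at $p=R\log(en/2^s)$, which costs $p^{1/\alpha}=\log^{1/\alpha}(en/2^s)$ once for each of the two ``sides'' in the bilinear form (Lemma \ref{lem:monotone.rearragment.decay} and Lemma \ref{lem:estimate.alpha.2s}). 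Along the chain $s\geq s_0$ with $2^{s_0}=k_\ast(T)$, this is maximal at $s=s_0$, yielding $\log^{2/\alpha}(en/k_\ast(T))$. Also, your proposed increment carries a $\|t-s\|_\infty$ term; absorbing that through chaining would require a $\gamma_\alpha(T,\|\cdot\|_\infty)$ functional, which is not comparable to $\ell_\ast(T)$ in general.

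In short, the paper's route---symmetrize to Bernoulli (using that $X$ is symmetric), reduce the Bernoulli chaos moments to the sparse bilinear quantities $\OD_{2^s}$ via Lemma \ref{lem:bernoulli.interpolation}, and establish the bound on $\OD_{2^s}$ by a separate self-bounding argument---is not a stylistic choice but is what resolves the circularity and produces the correct logarithm.
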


A version of Theorem \ref{thm:main-intro} holds when $k_*(T) < \log n$ (see Section \ref{rem:critical.dim.log.n} below), but taking into account that even for the Gaussian random vector, the thin-shell bound implies that $\delta$ is of the order of $\sqrt{\frac{\log n}{m}}$, Theorem \ref{thm:main-intro} is more interesting when $k_*(T) \geq \log n$. For such sets, and up to the term needed for dealing with the trivial obstruction, the error is (almost) the same as in the Gaussian case, only incurring an additional logarithmic factor:
$$
\sim \left(d_T\frac{\ell_*(T)}{\sqrt{m}} + \frac{\ell_*^2(T)}{m}\right)\log^{2/\alpha}\left(\frac{en}{k_*(T)}\right).
$$

Theorem \ref{thm:main-intro} is actually rather surprising, for two different reasons. First of all, the error is almost the same as for the Gaussian ensemble, which is unexpected; after all, linear functionals $\inr{X,x}$ can exhibit a significantly heavier tail behaviour than the Gaussian one. There is no apparent source of mixing that would lead to a Gaussian-like behaviour, and so the origin of the almost Gaussian behaviour is rather mysterious. Secondly, when $T$ is very large---with $k_*(T) \sim n$, it seems that there is not enough independence in $A$ to justify a Gaussian error. We give a concrete example of this insufficient independence in Section \ref{sec:ex-log-concave} and explain why the thin-shell bound is the source of the extra mixing that compensates for the insufficient independence.

\subsection{Example: log-concave random vectors} \label{sec:ex-log-concave}

Let $X$ be a symmetric, isotropic, log-concave vector in $\R^m$; that is, it has a density that is a log-concave function (for an extensive survey on log-concavity and its role in Asymptotic Geometric Analysis, see \cite{MR3185453,MR3331351}). Observe that $X$ satisfies the assumption of Theorem \ref{thm:main-intro}:

\begin{enumerate}
\item
 The thin-shell estimate needed for Theorem \ref{thm:main-intro} follows from a recent result due to Chen \cite{MR4244847}: setting $\theta_m=\sqrt{\log{m} \log\log{m}}$, Chen's estimate on the \emph{thin-shell parameter} implies that there is an absolute constant $c$ such that for $u \geq 1$, with probability at least $1-2\exp(-u)$,
$$
\left| \frac{\|X\|_2^2}{m}-1\right| \leq cu\frac{\theta_m}{\sqrt{m}}.
$$
Therefore, $X$ satisfies the wanted thin-shell bound with probability $1-\gamma$ for
$$
\delta=c  \frac{ \theta_m }{\sqrt{m}} \cdot \log\left(\frac{en}{\gamma}\right).
$$
\item By Borell's Lemma (see, e.g., \cite{MR3331351}), a log-concave random vector satisfies a $\psi_1$-$L_1$ norm equivalence; that is, for every $x \in \R^m$ and every $p \geq 1$, $\|\inr{X,x}\|_{L_p} \leq C p \|\inr{X,x}\|_{L_1}$.
\end{enumerate}

Theorem \ref{thm:main-intro} leads to the following:

\begin{corollary} \label{cor:log-concave}
Let $T \subset \R^n$ satisfy $k_*(T) \geq \log n$. Then with probability $1-\gamma-2\exp(-c_0k_*(T))-n^{-\beta}$,
\begin{align*}
\sup_{t \in T} \left| \|At\|_2^2-\|t\|_2^2 \right| 
\leq & cd_T^2  \frac{\theta_m}{\sqrt{m}} \cdot \log\left(\frac{en}{\gamma}\right)
\\
+ & c(\beta) \left(d_T\frac{\ell_*(T)}{\sqrt{m}} + \frac{\ell_*^2(T)}{m}\right)\log^{2}\left(\frac{en}{k_*(T)}\right).
\end{align*}
\end{corollary}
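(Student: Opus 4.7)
The plan is essentially to verify that the symmetric, isotropic, log-concave vector $X$ fits the framework of Definition \ref{def:property-of-X} and then to invoke Theorem \ref{thm:main-intro}, tracking the parameters carefully.

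First I would dispatch the thin-shell assumption (1) of Definition \ref{def:property-of-X}. Chen's bound \cite{MR4244847} says that for any $u\geq 1$, $\P(|\|X\|_2^2/m-1|\geq cu\theta_m/\sqrt{m})\leq 2e^{-u}$, where $\theta_m=\sqrt{\log m\log\log m}$. Applying this to each of the $n$ independent copies $X_1,\dots,X_n$ and taking a union bound with $u=\log(2n/\gamma)$ yields, with probability at least $1-\gamma$,
\[
\max_{1\leq i\leq n}\left|\frac{\|X_i\|_2^2}{m}-1\right|\leq c\,\frac{\theta_m}{\sqrt{m}}\,\log\!\left(\frac{en}{\gamma}\right)=:\delta.
\]
This is exactly the required thin-shell bound with parameter $\delta$ of the stated form.

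Next I would verify the $L_p$--$L_2$ norm equivalence (2). Borell's Lemma states that a log-concave random variable satisfies a reverse H\"older inequality: for every $x\in\R^m$ and $p\geq 1$, $\|\inr{X,x}\|_{L_p}\leq Cp\|\inr{X,x}\|_{L_1}$ for an absolute constant $C$. Since $\|\cdot\|_{L_1}\leq\|\cdot\|_{L_2}$, this gives $\|\inr{X,x}\|_{L_p}\leq Cp\|\inr{X,x}\|_{L_2}$ for all $p\geq 1$, so condition (2) holds with $\alpha=1$ and $L=C$ for every $p$ (in particular for $p\leq R(\beta)\log n$, for any choice of $R(\beta)$).

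Finally, having checked that $X$ is suitable with the parameters $\delta$ above, $\alpha=1$, $L=C$, and arbitrary $R(\beta)$, I apply Theorem \ref{thm:main-intro} directly. Since $\alpha=1$, the factor $\log^{2/\alpha}(en/k_*(T))$ becomes $\log^2(en/k_*(T))$, and the thin-shell term $2\delta d_T^2$ becomes $c\,d_T^2(\theta_m/\sqrt{m})\log(en/\gamma)$. Combining the two contributions on the bound in Theorem \ref{thm:main-intro} gives the announced inequality, with the stated failure probability $1-\gamma-2\exp(-c_0 k_*(T))-n^{-\beta}$. There is no genuine obstacle here beyond invoking the correct external inputs (Chen's thin-shell bound and Borell's lemma); the content lies in Theorem \ref{thm:main-intro} itself.
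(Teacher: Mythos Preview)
Your proposal is correct and follows essentially the same route as the paper: the paper also simply records that Chen's thin-shell estimate gives $\delta = c\,(\theta_m/\sqrt{m})\log(en/\gamma)$, that Borell's Lemma gives the $\psi_1$ norm equivalence (so $\alpha=1$, $L$ absolute, $R$ arbitrary), and then invokes Theorem~\ref{thm:main-intro}. There is no additional content beyond this specialization.
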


In particular, setting $T=S^{n-1}$, Corollary \ref{cor:log-concave} is an estimate on the extremal singular values of $A$: if
$$
\sup_{t \in S^{n-1}} \left| \|At\|_2^2-1 \right| \leq \eps,
$$
then on that event the extremal singular values of $A$ satisfy that
\begin{equation} \label{eq:extremal}
1-\eps \leq \lambda_{\min}(A) \leq \lambda_{\max}(A) \leq 1+\eps.
\end{equation}
It is possible to show that the smallest $\eps$ one can hope for when $m \geq n$---say, with probability $1/2$---, is $\eps \sim \sqrt{\frac{n}{m}}$ even if $A$ is a Gaussian matrix. As it happens, such an optimal dependence holds for a wide variety of random matrices that have ``enough randomness". Specifically, when $m \geq cn$ for $c \gg 1$, the random matrix consists of rows that are $m$ independent copies of a random vector $Z \in \R^n$,  $Z$ is well-behaved in the sense that
$$
\E \max_{1 \leq i \leq m} \|Z_i\|_2 \leq C\sqrt{n}
$$
and linear functionals exhibit an $L_p$-$L_2$ norm equivalence for some $p>4$ \cite{MR3872323}.

This extra independence---the fact that there are more independent random vectors (the $m$ rows of the random matrix) than the dimension $n$ of the underlying space, is crucial when trying to control the way the matrix acts of $S^{n-1}$ (see, for example, the proofs in \cite{MR2601042,mendelson2014singular,MR3872323}).

In the case that interests us the situation is different. 
Seemingly, there is insufficient independence at one's disposal: $A$ consists of $n$ independent columns and to ensure a satisfactory estimate on the extremal singular values, $AS^{n-1}$ must be contained in a thin shell around $S^{m-1}$. 
Despite the lack of independence, Theorem \ref{thm:main-intro} is enough to yield the optimal estimate: we have that $k_*(S^{n-1}) \sim n$, and when $X$ is symmetric, isotropic and log-concave, then $\theta_m \leq c_0(\zeta) m^\zeta$ for any $\zeta>0$. Therefore, $X$ is suitable for
$$
\delta =c_0(\zeta) m^{\zeta-1/2}\cdot \log \left(\frac{en}{\gamma}\right)
$$
with parameters $\alpha=1$, $L$ that is an absolute constant, and $R$ that can be arbitrarily large.
Corollary \ref{cor:log-concave} implies that, with probability at least $0.9$,
$$
\sup_{t \in S^{n-1}} \left| \|At\|_2^2-1 \right|
\leq 2 \delta + c\sqrt{\frac{n}{m}}
\leq c_1(\zeta) \sqrt{\frac{n}{m}}
$$
provided that
$$
n \leq m \leq c_2(\zeta) \left(\frac{n}{\log^2 n}\right)^{1/2\zeta}.
$$
Thus, as long as $m$ is polynomial in $n$ and $n$ is sufficiently large, a typical realization of the random matrix $A$ satisfies the optimal estimate on the extremal singular values.

The key to the optimal estimate is the thin-shell bound: it was shown in \cite{MR517198,MR751274} (see also \cite{AnttilaBallPerissinaki,MR1959791}) that if $\|X\|_2^2/m$ exhibits non-trivial concentration around its mean, then for a typical $\theta \in S^{m-1}$, $\inr{X,\theta}$ is not far from a Gaussian random variable. Unfortunately, the notion of distance in this type of estimate is far too weak to be of use for us, but still this indicates where the extra mixing comes from: a Gaussian-like behaviour of enough marginals of $X$ that implies ``internal cancellation".

%\begin{Remark}
%A careful inspection of the proof of Theorem \ref{thm:main-intro} shows that if $X$ satisfies a $\psi_\alpha$-$L_2$ norm equivalence, i.e., $R$ can be taken to be arbitrarily large, then the term $n^{-\beta}$ in the probability estimate can be improved to an exponential dependence on $n$. Because the probability estimate is of secondary interest here, we shall not pursue this point further.
%\end{Remark}

\subsection{Highlights of the argument}
Observe that for every $t \in \R^n$,
$$
\|At\|_2^2 = \left\|\frac{1}{\sqrt{m}}\sum_{i=1}^n t_i X_i \right\|_2^2 = \sum_{i=1}^n t_i^2 \frac{\|X_i\|_2^2}{m} + \frac{1}{m}\sum_{i \not = j} t_it_j \inr{X_i,X_j}.
$$
The second term is mean-zero, and so it is reasonable to expect that
$$
\sup_{t \in T} \left| \|At\|_2^2 - \|t\|_2^2 \right|
$$
is equivalent to the maximum of
$$
\sup_{t \in T} \left| \sum_{i=1}^n \left(\frac{\|X_i\|_2^2}{m}-1\right)t_i^2 \right|,
$$
and
$$
\sup_{t \in T} \left|\frac{1}{m}\sum_{i \not = j} t_it_j \inr{X_i,X_j}\right|.
$$
Clearly,
$$
\sup_{t \in T} \left| \sum_{i=1}^n \left(\frac{\|X_i\|_2^2}{m}-1\right)t_i^2 \right| \leq d_T^2 \max_{1 \leq i \leq n} \left| \frac{\|X_i\|_2^2}{m}-1 \right|,
$$
and following the premise of Question \ref{qu:main},
$$
\sup_{t \in T} \left| \sum_{i=1}^n \left(\frac{\|X_i\|_2^2}{m}-1\right)t_i^2 \right| \leq d_T^2  \delta \ \ \text{with probability at least } 1-\gamma.
$$
Hence, to control \eqref{eq:uniform-conc} one has to derive an appropriate bound on the second term.

As it happens, some features of the proof from \cite{mendelson2014singular}---on the extremal singular values of random matrices with iid rows---, play a central role in the proof of Theorem \ref{thm:main-intro}. We use some ideas from the argument in \cite{mendelson2014singular} to show that if $X$ is suitable then $A$ acts on sparse vectors almost as if it were a Gaussian matrix (see Theorem \ref{thm:random.matrix.satisfies.assumption} for an accurate formulation). At the same time, the argument from \cite{mendelson2014singular} cannot be used to prove a version of Theorem \ref{thm:main-intro} even in the case $T=S^{n-1}$, let alone for a general set $T$; there is not enough independence for that proof to work.

The other component of the proof is based on the method developed in \cite{mendelson2021column}. We consider a deterministic matrix $A$ that is well-behaved on sparse vectors (in a somewhat different sense than in \cite{mendelson2021column}), and show that by randomizing the columns of $A$ using independent signs $\eps_1,...,\eps_n$, one obtains a matrix $AD_\eps=A{\rm diag}(\eps_1,...,\eps_n)$ that exhibits an almost Gaussian behaviour. More accurately, for every $T \subset \R^n$ with $k_*(T) \geq  \log n$, and with high probability with respect to $(\eps_i)_{i=1}^n$,
$$
\sup_{t \in T} \left| \|AD_\eps t\|_2^2-\|t\|_2^2 \right| \leq 2\delta d_T^2+c \left(d_T\frac{\ell_*(T)}{\sqrt{m}} + \frac{\ell_*^2(T)}{m}\right)\log^{2/\alpha}\left(\frac{en}{k_*(T)}\right).
$$

The combination of those two facts (presented in Section \ref{sec:column.randomization} and in  Section \ref{sec:structural.estimate}), immediately leads to the proof of Theorem \ref{thm:main-intro}. Indeed, if $A$ has independent columns that are symmetric random vectors then $A$ and $AD_\eps$ have the same distribution.

\subsection{Notation}
Throughout \emph{absolute constants} are denoted by $c,c_1,c_2,\dots$. Their value may change from line to line. If a constant $c$ depends on a parameter $\beta$ we write $c=c(\beta)$;  $a\lesssim b$ denotes that $a\leq cb $ for an absolute constant $c$; and $a\sim b$ implies that both $a\lesssim b$ and $b\lesssim a$.

If $Y$ and $Z$ are independent random variables or vectors and $f$ is a measurable function of both, $\E_Y f $ denotes the expectation only w.r.t.\ $Y$, i.e.\ $\E_Y f=\int f(y,Z)\,d\P_Y(y)$.

The term \emph{random sign} refers to a Rademacher random variable, that is, a symmetric random variable taking the values $+1$ and $-1$.
Similarly, we use the term \emph{selector} for a $\{0,1\}$-valued random variable with mean $1/2$ and for iid selectors $\eta=(\eta_i)_{i=1}^n$, we set
\[ I_\eta:=\{ i\in \{1,\dots,n\} : \eta_i = 1\}. \]

For $I\subset\{1,\dots,n\}$ let $\proj_I\colon\mathbb{R}^n\to \mathbb{R}^n$ be the orthogonal projection on ${\rm span}(e_i)_{i \in I}$. Thus, $\proj_I x=\sum_{i \in I} x_i e_i$. If $(y_i)_{i=1}^n \in \R^n$ and $I\subset\{1,\dots,n\}$, set $y_I:=(y_i)_{i\in I}$; we  identify $(y_i)_{i \in I}$ with a vector in $\R^n$.

The \emph{cardinality} of a (finite) set $A$ is denoted by $|A|$.

\begin{definition}[Spheres and sparse vectors]
	For $I\subset \{1,\dots,n\}$, let
	\[ \mathcal{S}_I:=\{ x\in\mathbb{R}^n : \|x\|_2=1 \text{ and } x_i=0 \text{ for } i\in I^c\} \subset S^{n-1}.\]
	Thus, $\mathcal{S}_I$ is the Euclidean \emph{sphere} supported on coordinates in $I$.
	For $\ell \geq 1$, set
	\[ \mathcal{S}_{I,\ell}:=\{ x\in \mathcal{S}_{I} : 	|\{ i\in I : x_i\neq 0\}| \leq \ell \} \]
	to be the subset of $\mathcal{S}_I$ consisting of \emph{$\ell$-sparse} vectors.
\end{definition}

\section{Randomizing columns}
\label{sec:column.randomization}

Here we show that if a deterministic operator from $\mathbb{R}^n$ to $\mathbb{R}^m$ is well-behaved on sparse vectors, then an external randomization by $n$ independent random signs results in an operator that, with high probability, acts on an arbitrary sets almost as if it were Gaussian.

More precisely, in this section we fix a deterministic linear operator
\[ A\colon \mathbb{R}^n\to\mathbb{R}^m.\]
Let $\varepsilon=(\varepsilon_i)_{i=1}^n$ be a vector consisting of iid random signs, and consider the random operator $A_\varepsilon$ obtained by multiplying all columns of $A$ with the random signs:
%\[ A_\varepsilon:=A\cdot\mathrm{diag}(\varepsilon_1,\cdots,\varepsilon_n)\colon\mathbb{R}^n\to\mathbb{R}^m. \]
\[ A_\varepsilon:=A\cdot
	\begin{pmatrix}
    \varepsilon_{1} & & \\
    & \ddots & \\
    & & \varepsilon_n
  \end{pmatrix}
  \colon\mathbb{R}^n\to\mathbb{R}^m. \]
Let $\eta=(\eta_i)_{i=1}^n$ be a vector whose coordinates are iid  selectors (that are also independent of $(\eps_i)_{i=1}^n$), recall that $I_\eta=\{i : \eta_i=1\}$, and define
\begin{align}
	\label{def:alpha.I}
	\begin{split}
	\OD_{I,2^s}
	&:=\max_{x\in \mathcal{S}_{I,2^s}} \max_{y\in \mathcal{S}_{I^c,2^s}} \langle Ax,Ay \rangle ,\\
	\OD_{2^s}
	&:=\E_{\eta} \OD_{I_\eta,2^s}
	\end{split}
\end{align}
for $s\geq 0$.

Before we go any further, let us explain what we mean by `$A$ is well-behaved' on sparse vectors.

\begin{assumption}[Assumptions on $A$]
\label{ass:A.deterministic.sparse}
	There are $\delta\in[0,1]$, $\alpha\in(0,2]$ and $C_A$ such that the following holds.
	\begin{enumerate}
	\item
	We have that
	\[\max_{1 \leq i \leq n} \big| \|A e_i\|_2^2-1 \big|
	\leq  \delta .\]
	\item For every $1 \leq 2^s \leq n$,
	\[
	\OD_{2^s}
	\leq C_A \max\Big\{ \sqrt \frac{ 2^s }{ m }  ,\frac{ 2^s}{ m } \Big\}  \log^{2/\alpha}\Big( \frac{en}{2^s} \Big)  .\]
	\end{enumerate}
\end{assumption}

\begin{remark}
The random matrix $A=\frac{1}{\sqrt{ m }}(X_1,\cdots,X_n)$ satisfies that $Ae_i=\frac{1}{\sqrt{ m }} X_i$. Thus, Part (1) of Assumption \ref{ass:A.deterministic.sparse} is just a thin-shell bound. And, we will show in Theorem \ref{thm:random.matrix.satisfies.assumption} that if $X$ is suitable then the matrix $A$ satisfies Part (2) of Assumption \ref{ass:A.deterministic.sparse} as well (with a non-trivial probability).
\end{remark}

The following is the main result of this section.
	
\begin{theorem}[Column randomization]
\label{thm:column.randomization}
	Suppose that Assumption \ref{ass:A.deterministic.sparse} is satisfied.
	Then there is an absolute constant $c_1$ and a constant $c_2=c_2( C_A)$  such that the following holds.
	
	For every set $T\subset \mathbb{R}^n$ satisfying $\cdim  \geq \log n$, with probability at least $1-\exp(-c_1 \cdim )$,
	\begin{align*}
	\sup_{t\in T} \big| \|A_\varepsilon t\|_2^2 -\|t\|_2^2 \big|
	&\leq 2 \delta d_T^2 + c_2\Big(\frac{d_T \ell_\ast(T) }{ \sqrt{m} }  + \frac{\ell^2_\ast(T) }{ m } \Big) \log^{2/\alpha}\Big( \frac{en}{ \cdim }\Big) .
	\end{align*}
\end{theorem}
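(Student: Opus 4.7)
The plan is to decompose
$$\|A_\varepsilon t\|_2^2 - \|t\|_2^2 = \sum_{i=1}^n t_i^2\big(\|Ae_i\|_2^2 - 1\big) + \sum_{i\neq j} \varepsilon_i \varepsilon_j t_i t_j M_{ij}, \qquad M_{ij}:=\langle Ae_i,Ae_j\rangle,$$
and treat the two pieces separately. The diagonal term is controlled deterministically by Part (1) of Assumption \ref{ass:A.deterministic.sparse} and produces exactly the $\delta d_T^2$ contribution. The remainder of the proof is devoted to the Rademacher bilinear form $Q_\varepsilon(t):=\sum_{i\neq j}\varepsilon_i\varepsilon_j t_it_j M_{ij}$, uniformly in $t\in T$.

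First I would decouple $Q_\varepsilon$ using an auxiliary vector $\eta=(\eta_i)_{i=1}^n$ of iid selectors independent of $\varepsilon$. Because $M_{ij}$ is symmetric in $i\neq j$, a direct computation gives the identity
$$Q_\varepsilon(t) = 4\,\mathbb{E}_\eta R_\eta(t,\varepsilon), \qquad R_\eta(t,\varepsilon) := \big\langle A \proj_{I_\eta}(D_\varepsilon t),\, A \proj_{I_\eta^c}(D_\varepsilon t)\big\rangle,$$
so that $\sup_{t\in T}|Q_\varepsilon(t)|\le 4\,\mathbb{E}_\eta \sup_{t\in T}|R_\eta(t,\varepsilon)|$ by Jensen and monotonicity; upgrading this to a tail statement via Chebyshev in $\eta$ (applied after the conditional chaining bound below) moves the argument to a high-probability event for $\eta$. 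The crucial gain is that $R_\eta$ is \emph{bilinear} in the two independent sign blocks $\varepsilon_{I_\eta}$ and $\varepsilon_{I_\eta^c}$, which unlocks sub-Gaussian / Hanson--Wright type concentration inaccessible to the raw quadratic form $Q_\varepsilon$.

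Conditionally on $\eta$, the heart of the argument is a generic chaining estimate for the process $t\mapsto R_\eta(t,\varepsilon)$ on $T$. I would build an admissible sequence $(\pi_s)$ with $|\pi_s(T)|\le 2^{2^s}$ for which every $\pi_s(t)$, and thus every increment $\pi_{s+1}(t)-\pi_s(t)$, is essentially $2^{s+1}$-sparse; such a sequence exists via standard sparse-approximation / Gaussian-mean-width constructions, with the chain effectively truncated at the critical level $s_\ast\sim \log_2 \cdim$. The key input from the structural assumption is that for $x$ supported in $I_\eta$ and $y$ supported in $I_\eta^c$ of sparsity at most $2^{s+1}$, one has $|\langle Ax,Ay\rangle|\le \OD_{I_\eta,2^{s+1}}\|x\|_2\|y\|_2$. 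Plugging this into a Bernstein-type tail bound for the Rademacher bilinear form on each pair of chain increments, taking $\mathbb{E}_\eta$ so that $\OD_{I_\eta,2^{s+1}}$ is replaced by $\OD_{2^{s+1}}$, and then invoking Part (2) of Assumption \ref{ass:A.deterministic.sparse} gives an increment bound of order $\max\{\sqrt{2^s/m},\,2^s/m\}\cdot\log^{2/\alpha}(en/2^s)$.

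Summing over scales produces the claimed Bernstein-type estimate: the $\sqrt{2^s/m}$ regime combines with the Rademacher chaining budget $\sqrt{\log|\pi_{s+1}(T)|}$ and the sum of the $\ell_2$-diameters of the chain increments (controlled by $\ell_\ast(T)$) to yield the $(d_T\ell_\ast(T)/\sqrt m)\log^{2/\alpha}(en/\cdim)$ contribution, while the $2^s/m$ regime yields the $(\ell_\ast^2(T)/m)\log^{2/\alpha}(en/\cdim)$ contribution; the truncation at $s_\ast$ freezes the variable logarithm at $\log^{2/\alpha}(en/\cdim)$. The main technical obstacles I foresee are (i) keeping the sparsity of the chain increments consistent with the level-$s$ bound on $\OD_{I_\eta,2^s}$, (ii) switching at the threshold $2^s\sim m$ between the sub-Gaussian and sub-exponential regimes so that both terms of the Bernstein bound are correctly produced, and (iii) upgrading the selector decoupling from an in-expectation statement to the required high-probability $1-\exp(-c\cdim)$ tail.
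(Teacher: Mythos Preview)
Your diagonal/off-diagonal split and the selector decoupling $Q_\varepsilon(t)=4\,\E_\eta V_{I_\eta,t,t}$ are exactly what the paper does, and the overall shape (chaining along an admissible sequence, two regimes $\sqrt{2^s/m}$ vs.\ $2^s/m$, truncation at level $s_0$ with $2^{s_0}=\cdim$) is also right.

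The genuine gap is your claim that one can take an admissible sequence $(\pi_s)$ with $\pi_s(t)$ (and hence $\Delta_s t$) essentially $2^{s+1}$-sparse while still controlling $\sum_s \sqrt{2^s}\|\Delta_s t\|_2$ by $\ell_\ast(T)$. No such construction is available for an arbitrary $T\subset\R^n$: Maurey-type sparse approximation controls the $\ell_2$ error by $d_T/\sqrt{2^s}$, not by anything involving $\ell_\ast(T)$, and it does not respect the cardinality bound $|\pi_s(T)|\leq 2^{2^s}$ for the right sets. For a single dense vector $t_0$ (so $\ell_\ast(T)\sim\|t_0\|_2$) any $2^s$-sparse approximation already has error $\sim\|t_0\|_2$ at small $s$, so the chain cannot even get started. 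Without sparse increments you have no direct way to invoke $\OD_{I,2^s}$ on $\langle A\Delta_s t,\, \cdot\rangle$, and the whole mechanism stalls.

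The paper avoids this entirely: it chains along a \emph{standard} near-optimal admissible sequence for $\gamma_2(T,\|\cdot\|_2)$ with no sparsity whatsoever, and pushes the sparsity into a moment bound. The key lemma (Lemma~\ref{lem:bernoulli.interpolation}) shows that for any $a,b\in\R^n$ the $L_p$-norm of $\sum_{i\in I}\varepsilon_i a_i b_i$ is at most $c\|a\|_2\max_\ell\|\proj_{I_\ell}b\|_2$, where $(I_\ell)$ are consecutive blocks of the $p$ largest coordinates of $|a|$. Applying this twice (once for $\varepsilon_{I^c}$, once for $\varepsilon_I$) and passing to small nets on the $p$-sparse spheres yields $(\E_\varepsilon|V_{I^c,u,v}|^p)^{1/p}\leq c\|u\|_2\|v\|_2\,\OD_{I,p}$ for \emph{arbitrary} $u,v$ (Proposition~\ref{prop:V.norm.estimate}). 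The sparsity in $\OD_{I,p}$ thus emerges from the moment computation, not from the chain. With this in hand, Markov's inequality gives high-probability control of $|Z_{\Delta_s t}|$ and $\E_\eta|V_{I_\eta,\Delta_s t,\pi_s t}|$ at level $p=2^s$, and the chaining sums exactly as you outlined. Your obstacles (ii) and (iii) are handled routinely once (i) is replaced by this moment argument.
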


\begin{remark}
The situation when $\cdim<\log n$ is explained in Section \ref{rem:critical.dim.log.n}.
\end{remark}

Let us point out that if
\begin{align}
\label{eq:delta.bound}
\delta\leq c \sqrt \frac{\cdim}{m}
\end{align}
for an absolute constant $c$, then the distortion caused by $A_\varepsilon$ coincides (up to the logarithmic term) with the distortion caused by a Gaussian operator:

\begin{corollary}
\label{cor:column.randomization}
	In the setting of Theorem \ref{thm:column.randomization}, if $\delta$ satisfies \eqref{eq:delta.bound}, then with probability at least $1-\exp(-c_1 \cdim )$,
	\begin{align*}
	\sup_{t\in T} \big| \|A_\varepsilon t\|_2^2 -\|t\|_2^2 \big|
	&\leq c_2\Big( \frac{d_T \ell_\ast(T) }{ \sqrt{m} } + \frac{\ell^2_\ast(T) }{ m } \Big) \log^{2/\alpha}\Big( \frac{en}{\cdim }\Big) .
	\end{align*}
\end{corollary}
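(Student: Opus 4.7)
The plan is simply to invoke Theorem \ref{thm:column.randomization} and verify that under the additional quantitative hypothesis \eqref{eq:delta.bound}, the ``thin-shell'' contribution $2\delta d_T^2$ appearing in the conclusion of that theorem is dominated (up to an absolute constant) by the already-present almost-Gaussian error term. In other words, no new probabilistic argument is needed; the corollary is a direct repackaging of Theorem \ref{thm:column.randomization} after rewriting $\delta d_T^2$ in terms of $d_T$ and $\ell_\ast(T)$.

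The algebraic step relies on the identity $d_T\sqrt{\cdim}=\ell_\ast(T)$, which is immediate from the definition $\cdim=(\ell_\ast(T)/d_T)^2$. Substituting this into \eqref{eq:delta.bound} gives
\[ 2\delta d_T^2 \;\leq\; 2c\, d_T^2 \sqrt{\frac{\cdim}{m}} \;=\; 2c\cdot \frac{d_T\,\ell_\ast(T)}{\sqrt{m}}, \]
which, since $\log^{2/\alpha}(en/\cdim)\geq 1$, is bounded by a constant multiple of the first summand of the second term in Theorem \ref{thm:column.randomization}. Enlarging $c_2$ to absorb the factor $2c$ yields the claim, on the same high-probability event $\{1-\exp(-c_1\cdim)\}$ provided by Theorem \ref{thm:column.randomization}. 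There is no genuine obstacle here; the only thing to check is this one-line absorption.
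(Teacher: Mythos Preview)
Your argument is correct and is exactly the intended derivation: the paper treats the corollary as an immediate consequence of Theorem~\ref{thm:column.randomization}, with the only content being the absorption of $2\delta d_T^2$ into $c\,d_T\ell_\ast(T)/\sqrt{m}$ via $d_T\sqrt{\cdim}=\ell_\ast(T)$. Nothing is missing.
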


\subsection{Preliminaries on chaining}

The proof of Theorem \ref{thm:column.randomization} is based on a chaining argument. A comprehensive study on chaining method and their applications can be found in \cite{talagrand2014upper}. In what follows we will only state the few facts that are needed in the proof of Theorem \ref{thm:column.randomization}.

\begin{definition}
	A family $(T_s)_{s\geq 0}$ of subsets of $T$ is an \emph{admissible sequence} if $|T_0|=1$ and $|T_s|\leq 2^{2^s}$ for $s\geq 1$.
	For $t\in T$, let $\pi_s t$ be a point in $T_s$ that minimizes the Euclidean distance to $t$. Set $\Delta_0t:=\pi_0t$ and $\Delta_s:=\pi_{s+1}t-\pi_s t$ for $s\geq 1$.
\end{definition}

The $\gamma_2$ functional of $T \subset (\R^n, \| \ \|_2)$ is defined by
\[ \gamma_2(T,\|\cdot\|_2)
:=\inf_{(T_s)_{s\geq 0} }\, \sup_{t\in T} \sum_{s\geq 0} \sqrt{2^s} \|\Delta_s t\|_2,
\]
where the infimum is taken over all admissible sequences of $T$.
By Talagrand's \emph{majorizing measure theorem} (see the presentation in \cite{talagrand2014upper}), there are absolute constants $c_1$ and $c_2$ such that, for every set $T\subset\mathbb{R}^n$,
\[
c_1 \ell_\ast(T) \leq \gamma_2(T,\|\cdot\|_2) \leq c_2 \ell_\ast(T).
\]

From now on, fix an (almost) optimal admissible sequence $(T_s)_{s\geq 0}$ for  $\gamma_2(T,\|\cdot\|_2)$.
\begin{remark}
Observe that by the definition of the $\gamma_2$ functional and the majorizing measure theorem,
\begin{align} \label{eq:mm}
\sup_{t\in T} \sum_{s\geq s'} \|\Delta_s t\|_2
\lesssim \frac{ \ell_\ast(T) }{ \sqrt{2^{s'}} }
\end{align}
for every $s'\geq 0$.
\end{remark}

Fix $0\leq s_0\leq s_1$ such that
\begin{align}
\label{eq:def.s0.s1}
\begin{split}
2^{s_0} & = \cdim,\\
%\quad\text{and}\quad
2^{s_1}&=\max\{ \cdim , m \},
\end{split}
\end{align}
and assume without loss of generality that $s_0$ and $s_1$ are integers. To explain the choice of $s_1$, note that $\max\Big\{\sqrt\frac{2^s}{m}, \frac{2^s}{m}\Big\}$, a term that appears in the upper bound on $\OD_{2^s}$ in Assumption \ref{ass:A.deterministic.sparse}, is attained by $\sqrt\frac{2^s}{m}$ for $2^s<m$ and by $\frac{2^s}{m}$ otherwise. In the nontrivial case---when $\cdim \leq m$, $s_1$ is when the transition between the two occurs.

Now define
\begin{align*}
\Phi
&:=\sup_{t\in T} \sum_{s\geq s_1} \|A_\varepsilon \Delta_s t\|_2, \\
\Psi^2
&:= \sup_{t\in T} \big| \|A_\varepsilon \pi_{s_1}t\|_2^2 - \|\pi_{s_1}t\|_2^2 \big| .
\end{align*}

The first step in the proof of Theorem \ref{thm:column.randomization} is to decompose the distortion caused by $A_\varepsilon$ into several pieces:

\begin{lemma}
\label{lem:chaining}
	We have that
	\begin{align*}
	\sup_{t\in T} \big| \|A_\varepsilon t\|_2^2 - \|t\|_2^2 \big|
	&\leq \Psi^2 + 2\Phi \sqrt{\Psi^2 + d_T^2} + \Phi^2 + \sup_{t\in T} \big| \|\pi_{s_1} t\|_2^2 - \|t\|_2^2 \big|
	\end{align*}
	and
	\begin{align}
	\label{eq:error.t.pit}
	 \sup_{t\in T}  \big| \|\pi_{s_1} t\|_2^2 - \|t\|_2^2  \big|
	\leq \sup_{t\in T}\Big( \sum_{s \geq s_1} \|\Delta_st\|_2\Big)^2 + 2 d_T\sup_{t\in T}\sum_{s\geq s_1} \|\Delta_s t\|_2.
	\end{align}
\end{lemma}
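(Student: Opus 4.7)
The two inequalities are essentially bookkeeping identities that decompose the distortion around the chaining approximation at scale $s_1$, so the plan is to split $t = \pi_{s_1}t + r$, where $r := t - \pi_{s_1}t = \sum_{s \geq s_1}\Delta_s t$. The telescoping identity is valid once one normalizes the admissible sequence $(T_s)$ so that $\pi_s t \to t$ in $\|\cdot\|_2$, which is standard and loses no generality.

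For the first inequality, I would set $u := A_\varepsilon \pi_{s_1} t$ and $v := A_\varepsilon r = \sum_{s \geq s_1} A_\varepsilon \Delta_s t$, so that $A_\varepsilon t = u+v$ by linearity of $A_\varepsilon$. Expanding $\|u+v\|_2^2 = \|u\|_2^2 + 2\langle u,v\rangle + \|v\|_2^2$ and inserting $\pm\|\pi_{s_1}t\|_2^2$, the triangle inequality gives
\[
\bigl| \|A_\varepsilon t\|_2^2 - \|t\|_2^2 \bigr|
\leq \bigl| \|u\|_2^2 - \|\pi_{s_1}t\|_2^2 \bigr|
+ \bigl| \|\pi_{s_1}t\|_2^2 - \|t\|_2^2 \bigr|
+ 2\|u\|_2\|v\|_2 + \|v\|_2^2.
\]
By definition, the first term is at most $\Psi^2$. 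The triangle inequality applied to the sum defining $v$ gives $\|v\|_2 \leq \sum_{s\geq s_1} \|A_\varepsilon \Delta_s t\|_2 \leq \Phi$. For $\|u\|_2$, I would use that $\pi_{s_1} t\in T_{s_1}\subset T$, so $\|\pi_{s_1}t\|_2 \leq d_T$, together with $\|u\|_2^2 \leq \|\pi_{s_1}t\|_2^2 + \Psi^2 \leq d_T^2 + \Psi^2$. Substituting and taking the supremum over $t\in T$ yields the first claim.

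For the second inequality, the point is that $\|t\|_2^2 - \|\pi_{s_1}t\|_2^2 = 2\langle \pi_{s_1}t, r\rangle + \|r\|_2^2$, so Cauchy--Schwarz combined with $\|\pi_{s_1}t\|_2 \leq d_T$ gives $\bigl|\|t\|_2^2 - \|\pi_{s_1}t\|_2^2\bigr| \leq 2 d_T \|r\|_2 + \|r\|_2^2$; then $\|r\|_2 \leq \sum_{s\geq s_1}\|\Delta_s t\|_2$ by the triangle inequality, and taking the supremum finishes the argument.

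No genuine obstacle is expected here: the lemma is a deterministic decomposition of $\|A_\varepsilon t\|_2^2 - \|t\|_2^2$ into the part controlled by $\Psi^2$ at the scale $s_1$ and the tail controlled by $\Phi$. The only subtle point is the convergence $\pi_s t \to t$, which is a standard assumption on admissible sequences (and automatic when $T$ is taken separable, as one may assume). Everything else is Cauchy--Schwarz, the triangle inequality, and the identity $\|a+b\|_2^2 = \|a\|_2^2 + 2\langle a,b\rangle + \|b\|_2^2$.
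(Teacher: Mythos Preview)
Your proposal is correct and matches the paper's proof essentially line for line: both split $t=\pi_{s_1}t+(t-\pi_{s_1}t)$, expand the square, bound $\|A_\varepsilon(t-\pi_{s_1}t)\|_2\le\Phi$ via the telescoping sum, bound $\|A_\varepsilon\pi_{s_1}t\|_2^2\le\Psi^2+d_T^2$ using $\pi_{s_1}t\in T_{s_1}\subset T$, and apply Cauchy--Schwarz for the cross term; the second inequality is handled identically. Your remark about $\pi_s t\to t$ is a fair caveat that the paper leaves implicit.
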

\begin{proof}
	The proof is straightforward: for every $t\in T$,
	\begin{align} \label{eq:error-decom-1}
	\|A_\varepsilon t\|_2^2
	&=\|A_\varepsilon (t-\pi_{s_1}t) + A_\varepsilon \pi_{s_1}t\|_2^2  \nonumber
\\
	&=\|A_\varepsilon (t-\pi_{s_1}t)\|_2^2 + 2\langle A_\varepsilon (t-\pi_{s_1}t), A_\varepsilon \pi_{s_1}t\rangle + \|A_\varepsilon \pi_{s_1}t \|_2^2.
	\end{align}
	Writing $t-\pi_{s_1}t$ as the telescopic sum $\sum_{s\geq s_1} \Delta_s t$, the first term in \eqref{eq:error-decom-1} is dominated by $\Phi^2$.
	Moreover,
	\begin{align*}
	\|A_\varepsilon \pi_{s_1}t \|_2^2
	&\leq \big| \|A_\varepsilon \pi_{s_1}t \|_2^2- \|\pi_{s_1}t\|_2^2 \big| + d_T^2 \\
	&\leq \Psi^2 + d_T^2
	\end{align*}
	and by the Cauchy-Schwartz inequality,
	\begin{align*}
	\big| \|A_\varepsilon t\|_2^2 - \|t\|_2^2 \big|
	\leq \Psi^2 + 2\Phi\sqrt{\Psi^2 + d_T^2 } + \big| \|\pi_{s_1}t \|_2^2-\|t\|_2^2 \big|.
	\end{align*}
	As $t\in T$ was arbitrary, this completes the proof of the first claim.
	
	Turning to the second claim, note that
	\begin{align*}
	\big| \|t\|_2^2 - \|\pi_{s_1}t\|_2^2 \big|
	&\leq \|t-\pi_{s_1}t\|_2^2 + 2|\langle t-\pi_{s_1}t,\pi_{s_1}t\rangle|\\
	&\leq \|t-\pi_{s_1}t\|_2^2 + 2 \|t-\pi_{s_1}t\|_2 \| \pi_{s_1}t\|_2 \\
	&\leq \Big(\sum_{s\geq s_1} \|\Delta_s t\|_2\Big)^2 + 2 \sum_{s\geq s_1} \|\Delta_s t\|_2 d_T,	
	\end{align*}
	as required.
\end{proof}

Applying the estimate from \eqref{eq:mm} to \eqref{eq:error.t.pit}, it is evident that
\begin{align}
\label{eq:pit.t.error}
\begin{split}
\sup_{t\in T} | \|\pi_{s_1} t\|_2^2 - \|t\|_2^2 |
&\lesssim   \frac{ \ell_\ast^2(T) }{ 2^{s_1} } + \frac{ d_T \ell_\ast(T) }{ \sqrt{ 2^{s_1}} }\\
&\lesssim \frac{\ell_\ast^2(T)}{m} + \frac{d_T \ell_\ast(T)}{\sqrt m},
\end{split}
\end{align}
where the last inequality is actually an equality if $m \geq \cdim$.
Thus, we may now turn our attention to estimating $\Psi$ and $\Phi$, a task that requires some preparation.

\subsection{A decoupling argument}
\label{sec:decoupling}

Observe that for every $u\in\mathbb{R}^n$,
\begin{align*}
\|A_\varepsilon u\|_2^2
&=\sum_{i,j=1}^n \langle Ae_i,A e_j\rangle \varepsilon_i\varepsilon_j u_iu_j \\
&=\sum_{i=1}^n \|Ae_i\|_2^2 u_i^2 +  \sum_{i,j=1, \, i\neq j}^n \langle Ae_i,A e_j\rangle \varepsilon_i\varepsilon_j u_iu_j.
\end{align*}
Throughout we refer to the first term (and other terms of a similar nature) as the \emph{diagonal term} and to the second one as the \emph{off-diagonal term}.
Thanks to Part (1) of Assumption \ref{ass:A.deterministic.sparse}, the estimate on the diagonal term is immediate:
\begin{align*}
\Big| \sum_{i=1}^n \|Ae_i\|_2^2 u_i^2 -\|u\|_2^2  \Big|
&=\Big| \sum_{i=1}^n (\|Ae_i\|_2^2 -1) u_i^2 \Big|
\leq  \delta \|u\|_2^2.
\end{align*}
Therefore, sufficient control on the off-diagonal term would lead to an estimate on the error between $\|A_\varepsilon u\|_2^2$ and $\|u\|_2^2$.

To that end, let us introduce the following notation.

\begin{definition}
	For $I\subset\{1,\dots,n\}$ and $u,v\in\mathbb{R}^n$, define
	\begin{align*}
	Z_u
	&:=\sum_{i,j=1, \,i\neq j}^n \langle Ae_i,A e_j\rangle \varepsilon_i\varepsilon_j u_iu_j,\\
	W_{I,u}
	&:= A^\ast A \Big(\sum_{i\in I} \varepsilon_i u_i e_i\Big), \ \ \ {\rm and} \\
	V_{I,u,v}
	&:= \sum_{i\in I} \varepsilon_i u_i \big( W_{I^c,v} \big)_i.
	\end{align*}
\end{definition}

In addition, let $\eta$ be a vector consisting of iid selectors (that are also independent of $(\eps_i)_{i=1}^n$) and recall that $I_\eta=\{ i  : \eta_i=1\}$. A standard decoupling argument shows that
\begin{align}
\label{eq:Z.equals.EV}
4\E_\eta V_{I_\eta,u,u}
&= \sum_{i,j=1}^n \E_\eta   4\eta_i(1-\eta_j)   \langle Ae_i,Ae_j\rangle \varepsilon_i \varepsilon_j u_i u_j
= Z_u
\end{align}
and in particular,
\[ \|A_\varepsilon u\|_2^2
=\sum_{i=1}^n \|Ae_i\|_2^2 u_i^2 +4\E_\eta  V_{I_\eta,u,u}\]
for every $u\in\mathbb{R}^n$.

\subsection{Preliminary estimates on Bernoulli processes}

As will become clear in what follows, it is important to find estimates on $V_{I,u,v}$ that hold with high probability. And, by Markov's inequality, for a random variable $Y$ and $p \geq 1$,
\[ \P\Big( |Y|\geq e \big(\E |Y|^p\big)^{ 1/p } \Big)
\leq\exp(-p).\]
Thus, it suffices to estimate $L_p$-norms of $Y$ for suitable choices of $p$. In the context of a chaining procedure, the natural choices are $p=2^s$ for $s\geq 0$.

The following estimate on the growth of the (iterated) Bernoulli process $V$ is a crucial component in the proof of Theorem \ref{thm:column.randomization}.
To formulate it, recall that
$$
\OD_{I,p}=\max_{x\in \mathcal{S}_{I,p}} \max_{y\in \mathcal{S}_{I^c,p}} \langle Ax,Ay \rangle
$$
and
$$
\OD_{p}=\E_{\eta} \OD_{I_\eta,p}.
$$

\begin{proposition}
\label{prop:V.norm.estimate}
	There is an absolute constant $c$ such that the following holds.
	Let $u,v\in\mathbb{R}^n$ and set $I\subset\{1,\dots,n\}$. Then for $p \geq \log n$,	
\[ \big( \E_{\varepsilon} |V_{I^c,u,v}|^p  \big)^{ 1/p }
	\leq c \|u\|_2\|v\|_2 \OD_{I,p}.\]
\end{proposition}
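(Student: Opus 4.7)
My plan is to bound the decoupled bilinear Rademacher chaos $V_{I^c,u,v}$ by combining a conditional Khintchine estimate with a sparse reduction that matches the definition of $\OD_{I,p}$.

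First I rewrite
\[ V_{I^c,u,v} = \sum_{i \in I^c,\, j \in I} \varepsilon_i \varepsilon_j u_i v_j \langle Ae_i, Ae_j\rangle, \]
which is a genuinely decoupled bilinear Rademacher chaos, since $I \cap I^c = \emptyset$ means that the sign families $(\varepsilon_i)_{i \in I^c}$ and $(\varepsilon_j)_{j \in I}$ are independent. Conditioning on $(\varepsilon_j)_{j \in I}$ and applying Khintchine's inequality in the remaining signs gives
\[ \bigl(\E_{\varepsilon_{I^c}} |V_{I^c,u,v}|^p\bigr)^{1/p} \lesssim \sqrt p\, \|g(\varepsilon_I)\|_2, \]
where $g(\varepsilon_I) \in \R^n$ is supported on $I^c$ with entries $g_i = u_i (W_{I,v})_i$. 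Taking $L^p$-norms in $\varepsilon_I$, the task reduces to bounding $(\E_{\varepsilon_I} \|g(\varepsilon_I)\|_2^p)^{1/p}$ by $\|u\|_2 \|v\|_2 \OD_{I,p}/\sqrt p$.

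Next, by duality,
\[ \|g(\varepsilon_I)\|_2 = \sup_{z}\, \bigl\langle A(z \odot u),\, A(\varepsilon_I \odot v_I)\bigr\rangle, \]
where the supremum runs over $z \in \R^n$ with $\|z\|_2 \leq 1$ and $\mathrm{supp}(z) \subset I^c$. The central step is to reduce this supremum to one over $p$-sparse vectors, where it is controlled by $\OD_{I,p}$. My approach is a peeling decomposition: split both $z \odot u$ and $\varepsilon_I \odot v_I$ into a principal $p$-sparse piece---the $p$ coordinates of largest absolute value---and a tail whose $\ell_\infty$-norm is at most $\|u\|_2/\sqrt p$ (respectively $\|v\|_2/\sqrt p$). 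The principal-principal inner product is bounded directly by $\|u\|_2 \|v\|_2 \OD_{I,p}$. The cross-terms and the tail-tail term are handled by iterating the peeling dyadically across sparsity scales; at each level the tail's diminishing $\ell_\infty$-norm yields bounds that collapse into a geometric series matching the desired estimate.

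The main obstacle is controlling the cross-terms (principal vs.\ tail) without picking up losses beyond what is encoded in $\OD_{I,p}$. The hypothesis $p \geq \log n$ plays a dual role here. First, it guarantees $n^{1/p} \leq e$, so that taking $L^p$-norms of coordinate-wise maxima over $n$ terms costs only an absolute constant; this is what allows crude bounds like $\max_{i \in I^c} |\cdot|$ to be absorbed. Second, it ensures that the combinatorial entropy of $p$-sparse supports---which appears whenever the peeling requires a union bound---stays in line with the sparse operator norm $\OD_{I,p}$ that appears on the right-hand side of the bound.
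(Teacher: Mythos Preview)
Your reduction has a genuine gap at the very first step. Applying the crude Khintchine bound
\[
\bigl(\E_{\varepsilon_{I^c}} |V_{I^c,u,v}|^p\bigr)^{1/p} \lesssim \sqrt p\, \|g(\varepsilon_I)\|_2
\]
throws away information that cannot be recovered later, and the target you then set for yourself,
\[
\bigl(\E_{\varepsilon_I} \|g(\varepsilon_I)\|_2^p\bigr)^{1/p} \lesssim \frac{\|u\|_2\|v\|_2 \,\OD_{I,p}}{\sqrt p},
\]
is simply false. Take $1 \in I^c$, $j_0 \in I$, a matrix $A$ with $\langle Ae_1,Ae_{j_0}\rangle=1$ and all other cross-products $\langle Ae_i,Ae_j\rangle=0$ for $i\in I^c$, $j\in I$, and set $u=e_1$, $v=e_{j_0}$. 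Then $\OD_{I,p}=1$, while $g(\varepsilon_I)=\varepsilon_{j_0} e_1$ so that $\|g\|_2=1$ deterministically; the left side equals $1$ and the right side equals $1/\sqrt p$. Your peeling cannot help here: both $z\odot u$ and $\varepsilon_I\odot v_I$ are already $1$-sparse, so there is nothing to peel.

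The underlying issue is that the subgaussian Khintchine bound with the $\sqrt p$ factor is sharp only when the coefficient vector has many comparable coordinates; when $u$ is sparse it overshoots by exactly the factor you then try to recoup. The paper replaces this step by a finer Bernoulli moment bound (Lemma~\ref{lem:bernoulli.interpolation}): partitioning $I^c=\bigcup_\ell J_\ell$ into blocks of size $p$ according to the decreasing rearrangement of $(|u_i|)_{i\in I^c}$, one has
\[
\Bigl(\E_{\varepsilon_{I^c}}\Bigl|\sum_{i\in I^c}\varepsilon_i u_i b_i\Bigr|^p\Bigr)^{1/p}
\lesssim \|u\|_2\,\max_\ell \|\proj_{J_\ell} b\|_2.
\]
Since each $J_\ell$ has at most $p$ elements, $\max_\ell \|\proj_{J_\ell} b\|_2$ is a supremum of $\langle b,y\rangle$ over $p$-sparse unit vectors $y$ supported in $I^c$; this is what connects naturally to $\OD_{I,p}$ once each $\mathcal{S}_{J_\ell}$ is replaced by a net of cardinality $\exp(cp)$ and the union over the at most $n$ blocks is taken (here $p\geq\log n$ ensures the total net still has cardinality $\exp(c'p)$). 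A second application of the same lemma, now partitioning $I$ into blocks according to $(|v_j|)_{j\in I}$, handles the inner Rademacher sum and yields the claim.
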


\begin{corollary}
\label{cor:V.probability.estimate}
	For every $a>0$ there is a constant $c=c(a)$, such that the following holds.
	For $u,v\in\mathbb{R}^n$ and $2^s\geq \log n$, with $\P_{\varepsilon}$-probability at least
	\[1-\exp(-a 2^s),\]
	we have that
	\begin{align*}
	|Z_u|
	&\leq c \|u\|_2^2 \, \OD_{2^s}, \ \ \ {\rm and} \\
	\E_\eta |V_{I_\eta,u,v}|+ \E_\eta |V_{I^c_\eta,u,v}|
	&\leq c \|u\|_2\|v\|_2 \, \OD_{2^s}.
	\end{align*}
\end{corollary}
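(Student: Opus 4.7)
The plan is to establish the $L_p(\varepsilon)$-moment bound
\[
\bigl\| \E_\eta |V_{I_\eta,u,v}| \bigr\|_{L_p(\varepsilon)} \leq c_0 \|u\|_2\|v\|_2\, \OD_{2^s}
\]
for $p = 2^s \geq \log n$, and then to conclude via Markov's inequality. The step that makes this work is Minkowski's integral inequality, which moves the $\eta$-expectation outside the $L_p(\varepsilon)$-norm. A naive use of Jensen at this point would instead leave us with the larger quantity $(\E_\eta \OD_{I_\eta,p}^p)^{1/p}$, which is not what we want.

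To execute this, I would first note from the definition \eqref{def:alpha.I} that $\OD_{I,p} = \OD_{I^c,p}$, by symmetry of $\langle A\cdot, A\cdot\rangle$ and swapping the roles of $I$ and $I^c$. Applying Proposition \ref{prop:V.norm.estimate} conditionally on $\eta$, with the set $I_\eta^c$ in place of $I$, gives for each realisation of $\eta$ that
\[
\bigl( \E_\varepsilon |V_{I_\eta,u,v}|^p \bigr)^{1/p}
\leq c_0 \|u\|_2 \|v\|_2\, \OD_{I_\eta, 2^s}.
\]
Minkowski's integral inequality (with $\P_\eta$ inside and $\P_\varepsilon$ outside) then yields
\[
\bigl\| \E_\eta |V_{I_\eta,u,v}| \bigr\|_{L_p(\varepsilon)}
\leq \E_\eta \bigl(\E_\varepsilon |V_{I_\eta,u,v}|^p \bigr)^{1/p}
\leq c_0 \|u\|_2 \|v\|_2\, \E_\eta \OD_{I_\eta, 2^s}
= c_0 \|u\|_2 \|v\|_2\, \OD_{2^s}.
\]
Markov's inequality with threshold $3 e^a c_0 \|u\|_2 \|v\|_2\, \OD_{2^s}$ will then produce a failure probability of at most $\tfrac{1}{3}\exp(-a 2^s)$.

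The bound on $\E_\eta |V_{I_\eta^c,u,v}|$ follows by the same argument, since $1-\eta$ is a vector of iid selectors with the same law as $\eta$ and hence $\E_\eta |V_{I_\eta^c,u,v}|$ has the same $\varepsilon$-distribution as $\E_\eta |V_{I_\eta,u,v}|$. For the bound on $|Z_u|$, I would invoke the decoupling identity \eqref{eq:Z.equals.EV}, $Z_u = 4 \E_\eta V_{I_\eta,u,u}$, so that $|Z_u| \leq 4 \E_\eta |V_{I_\eta,u,u}|$, and then apply the previous estimate with $v = u$. A union bound over the three resulting events produces the corollary with $\varepsilon$-probability at least $1-\exp(-a 2^s)$ and constant $c(a)$ of order $e^a$. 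The only genuinely non-routine point is the correct use of Minkowski's integral inequality in place of Jensen; once that is in place, the proof is a direct combination of Proposition \ref{prop:V.norm.estimate}, the symmetry $\OD_{I,p}=\OD_{I^c,p}$, the decoupling identity, and Markov's inequality, and I do not anticipate any further obstacle.
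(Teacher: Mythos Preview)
Your argument is correct and is essentially the paper's own proof. The paper writes ``by Jensen's inequality'' for the step $\|\E_\eta f\|_{L_p(\varepsilon)}\le \E_\eta\|f\|_{L_p(\varepsilon)}$, meaning Jensen for the convex (Banach-valued) map $g\mapsto\|g\|_{L_p(\varepsilon)}$, which is exactly the Minkowski integral inequality you invoke; your warning about ``naive Jensen'' giving $(\E_\eta \OD_{I_\eta,p}^p)^{1/p}$ is therefore aimed at a different reading than the one the paper intends. Your explicit mention of the symmetry $\OD_{I,p}=\OD_{I^c,p}$ (equivalently, that $\eta$ and $1-\eta$ are equidistributed) and your handling of $|Z_u|$ via the decoupling identity \eqref{eq:Z.equals.EV} fill in details the paper leaves implicit, but the route is the same.
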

\begin{proof}
	We shall only prove the estimate on $\E_\eta|V_{I_\eta,u,v}|$; the required bounds on $\E_\eta |V_{I^c_\eta,u,v}|$ and $|Z_u|$  follow an identical path and are omitted.

	Set $f(\varepsilon,\eta)=|V_{I_\eta,u,v}|$ (as the latter depends on both random vectors $\varepsilon$ and $\eta$). The aim is to show that with $\P_\varepsilon$-probability at least $1-\exp(-a 2^s)$,
$$
\E_\eta f(\varepsilon,\eta)\leq c(a)\|u\|_2\|v\|_2 \, \OD_{2^s}.
$$	
	Set $p=2^s$ and denote by $\|\cdot\|_{L_p(\varepsilon)}$ the $L_p$-norm taken w.r.t.\ $\varepsilon$. By Jensen's inequality and the independence of $\eta$ and $\varepsilon$ it is evident that
	\begin{align*}
	\| \E_\eta f(\varepsilon,\eta) \|_{L_p(\varepsilon)}
	&\leq \E_\eta  \| f(\varepsilon,\eta) \|_{L_p(\varepsilon)} \\
	&\leq c_1 \|u\|_2\|v\|_2 \OD_{p},
	\end{align*}
	where the last inequality follows from Proposition \ref{prop:V.norm.estimate} and since $\OD_p=\E_\eta \OD_{I_\eta,p}$.
	
	By Markov's inequality,
	\begin{align*}
	\P_{\varepsilon}\Big( \E_{\eta}  f(\varepsilon,\eta) \geq c_2 \|u\|_2\|v\|_2 \OD_{p}\Big)
	&\leq \frac{ \E_{\varepsilon} \big| \E_{\eta} f(\varepsilon,\eta)\big|^p  }{ (c_2 \|u\|_2\|v\|_2 \OD_{p} )^p }  \\
	& \leq \frac{ c_1^p }{c_2^p}
	\leq \exp(-ap)
	\end{align*}
	for $c_2 := c_1\exp(a)$, which completes the proof.	
\end{proof}

The proof of Proposition \ref{prop:V.norm.estimate} is based on the growth rate of moments of Bernoulli processes, established in \cite{mendelson2021column} (see Equation (2.15) there). We present the proof of that estimate for the sake of completeness.

\begin{lemma}
\label{lem:bernoulli.interpolation}
	There is an absolute constant $c$ such that the following holds.
	Let $a,b\in\mathbb{R}^n$, $p\geq 1$, and $I\subset\{1,\dots,n\}$.
	Consider a partition of $I$ to the disjoint union $\cup_\ell I_\ell$, where $I_1\subset I$ is the set of the $p$ largest coordinates of $(|a_i|)_{i \in I}$, $I_2\subset I\setminus I_1$ is the set of the $p$ largest remaining coordinates and so on. Then
\[ \Big( \E \Big| \sum_{i\in I} \varepsilon_i a_ib_i \Big|^p\Big)^{1/p}
\leq c \|a\|_2 \max_{\ell} \| \proj_{I_\ell} b\|_2.\]
\end{lemma}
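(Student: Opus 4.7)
The plan is to split the sum into a "heavy" part (supported on $I_1$, the indices of the $p$ largest coordinates of $|a|$ in $I$) and a "light" tail (supported on $I\setminus I_1$), then estimate the first via the triangle inequality in $L_\infty$ and the second via Khintchine's inequality. The key point is that Khintchine applied directly to the whole sum loses a factor of $\sqrt p$; the partition is designed precisely so that the $\sqrt p$ from Khintchine on the tail is absorbed by the smallness of $\max_{i \in I_\ell} |a_i|$ for $\ell \geq 2$.

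Write $M:=\max_\ell \|\proj_{I_\ell}b\|_2$, and decompose
\[
\sum_{i\in I}\varepsilon_i a_i b_i = \underbrace{\sum_{i\in I_1}\varepsilon_i a_i b_i}_{S_1}+\underbrace{\sum_{\ell\geq 2}\sum_{i\in I_\ell}\varepsilon_i a_i b_i}_{S_2}.
\]
For $S_1$ I use the crude bound $\|S_1\|_{L_p}\leq \|S_1\|_{L_\infty}\leq \sum_{i\in I_1}|a_ib_i|\leq \|\proj_{I_1}a\|_2\,\|\proj_{I_1}b\|_2\leq \|a\|_2\,M$ by Cauchy--Schwarz.

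For $S_2$ I apply Khintchine: $\|S_2\|_{L_p}\leq c\sqrt p\bigl(\sum_{i\in I\setminus I_1}a_i^2 b_i^2\bigr)^{1/2}$. The combinatorial heart of the argument is the bound, for $\ell\geq 2$,
\[
\max_{i\in I_\ell}a_i^2 \leq \min_{i\in I_{\ell-1}}a_i^2 \leq \frac{1}{p}\sum_{i\in I_{\ell-1}} a_i^2,
\]
which follows from the sorting: the coordinates of $|a|$ in $I_{\ell-1}$ are all at least as large as those in $I_\ell$, and $|I_{\ell-1}|=p$ (any non-full block must be the last one, in which case $I_\ell=\emptyset$). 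Using this,
\[
\sum_{i\in I_\ell}a_i^2 b_i^2 \leq \max_{i\in I_\ell}a_i^2 \cdot \|\proj_{I_\ell}b\|_2^2 \leq \frac{M^2}{p}\sum_{i\in I_{\ell-1}}a_i^2,
\]
and summing over $\ell\geq 2$ the $I_{\ell-1}$ are disjoint subsets of $I$, so $\sum_{i\in I\setminus I_1}a_i^2b_i^2\leq M^2\|a\|_2^2/p$. Hence $\|S_2\|_{L_p}\leq c\sqrt p\cdot M\|a\|_2/\sqrt p = cM\|a\|_2$.

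Combining the two estimates via the triangle inequality in $L_p$ gives the stated bound. The only place where any care is needed is the combinatorial inequality for $\max_{i\in I_\ell}a_i^2$, and in particular handling the edge case where the last block $I_\ell$ may have fewer than $p$ elements — this is harmless since such an $I_\ell$ is terminal and all preceding blocks do have cardinality exactly $p$.
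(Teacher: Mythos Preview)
Your proof is correct and follows essentially the same route as the paper: split off $I_1$ and bound it in $L_\infty$ via Cauchy--Schwarz, apply Khintchine (equivalently, the subgaussian bound from Hoeffding) on the remainder, and control $\sum_{i\in I\setminus I_1}a_i^2b_i^2$ via the key combinatorial inequality $\|\proj_{I_\ell}a\|_\infty^2\le \|\proj_{I_{\ell-1}}a\|_2^2/p$. The paper's argument is identical up to notation; your explicit remark on the possibly short final block is a nice touch the paper leaves implicit.
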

\begin{proof}
	
It is standard to verify that there is an absolute constant $c_1$ such that
	\begin{align}
	\label{eq:bernoulli.interpolation}
	\Big( \E \Big| \sum_{i\in I} \varepsilon_i a_ib_i \Big|^p\Big)^{1/p}
	\leq \sum_{i\in I_1} |a_ib_i| + c_1 \sqrt{p}\Big( \sum_{i\in I\setminus I_1} a_i^2b_i^2\Big)^{1/2}.
	\end{align}
	Indeed, first use the triangle inequality for the $L_p$-norm to split the sum over $I$ into a sum over $I_1$ and $I\setminus I_1$.
	To get the first term, use the $L_\infty$ bound $|\sum_{i\in I_1} \varepsilon_i a_ib_i| \leq \sum_{i\in I_1} |a_ib_i|$.
	To get the second term, use that $\sum_{i\in I\setminus I_1} \varepsilon_i a_ib_i$ is subgaussian with parameter $(\sum_{i\in I\setminus I_1} a_i^2b_i^2)^{1/2}$ (by Hoeffding's inequality) and the growth of the $L_p$-norms of subgaussian random variables (see, e.g., \cite[Exercise 3.2.4]{talagrand2014upper}).
	
	The first term in \eqref{eq:bernoulli.interpolation} can be bounded by the Cauchy-Schwartz inequality
	\begin{align*}
	\sum_{i\in I_1} |a_ib_i|
	\leq \|\proj_{I_1} a\|_2 \|\proj_{I_1} b\|_2
	\leq  \|a\|_2 \|\proj_{I_1} b\|_2.
	\end{align*}

	Turning to the second term, let $\|\cdot\|_\infty$ be the norm in $\ell_\infty^n$.
	By the choice of the sets $I_\ell$ we have that
	\[ \|\proj_{I_{\ell+1}} a\|_\infty
	\leq \frac{ \| \proj_{I_\ell}a\|_2 }{ \sqrt p } \]
	for every $\ell\geq 1$.
	Therefore
	\begin{align*}
	\sum_{i\in I\setminus I_1} a_i^2b_i^2
	= \sum_{\ell\geq 1} \sum_{i\in I_{\ell+1} } a_i^2b_i^2
	&\leq \sum_{\ell\geq 1} \|\proj_{I_{\ell+1}} a\|_\infty^2 \|\proj_{I_{\ell+1}} b\|_2^2  \\
	&\leq \Big( \sum_{\ell\geq 1} \frac{ \| \proj_{I_\ell}a\|_2^2 }{ p } \Big)\max_{\ell\geq 1} \|\proj_{I_{\ell+1}} b\|_2^2  \\
	&= \|a\|_2^2 \max_{\ell\geq 1} \frac{ \|\proj_{I_{\ell+1}} b\|_2^2}{p}.
	\end{align*}
	That, combined with \eqref{eq:bernoulli.interpolation} completes the proof.	
\end{proof}

\begin{proof}[Proof of Proposition \ref{prop:V.norm.estimate}]
	We use a twofold application of Lemma \ref{lem:bernoulli.interpolation} and Fubini's Theorem.
	Throughout fix $u,v,I,p$ as in the proposition.
	
	{\it Step 1.}
	To deal with the randomness originating from $\varepsilon_{I^c}$,  we work conditionally on $\varepsilon_I$.
	Observe that there is a set $\mathcal{Y}\subset \mathcal{S}_{I^c}$ of cardinality at most $\exp(c_1 p)$ such that
	\[ \big( \E_{\varepsilon_{I^c}} |V_{I^c,u,v}|^p \big)^{1/p}
	\leq c_2 \|u\|_2 \max_{y\in \mathcal{Y}} \sum_{i\in I} \varepsilon_i v_i (A^\ast A y)_i.\]
	
	Indeed, denote by $I^c=\cup_\ell J_\ell$ the partition of $I^c$, where $J_1$ consists of the $p$ largest coordinates of $(|u_i|)_{i \in I^c}$, $J_2$ consists of the $p$ largest remaining coordinates and so on.
	There are at most $\frac{n}{p}+1$ such sets $J_\ell$ and by
	Lemma \ref{lem:bernoulli.interpolation}
	\begin{align*}
%	\label{eq:prop.V.1}
%	\begin{split}
	\big( \E_{\varepsilon_{I^c}} |V_{I^c,u,v}|^p \big)^{1/p}
	&= \Big( \E_{\varepsilon_{I^c}} \Big|\sum_{j\in I^c}\varepsilon_j u_j (W_{I,v})_j \Big|^p\Big)^{1/p} \\
	&\leq c_3 \|u\|_2 \max_\ell \|\proj_{J_\ell} W_{I,v}\|_2.
%	\end{split}
	\end{align*}
	
	Using a standard volumetric estimate and successive approximation (see, e.g., \cite[Corollary 4.1.15]{MR3331351}) for every $\ell$, it follows that there are sets $\mathcal{S}'_{J_\ell}\subset \mathcal{S}_{J_\ell}$, each of cardinality
	\[ |\mathcal{S}'_{J_\ell}| \leq \exp(c_4 p)\]
	such that for every $x \in \R^m$,
$$
\max_{y\in \mathcal{S}_{J_\ell}} \langle x,y\rangle
	\leq 2 \max_{y\in \mathcal{S}'_{J_\ell}} \langle x,y\rangle.
$$
In particular,
	\[ \|\proj_{J_\ell} x\|_2
	=\max_{y\in \mathcal{S}_{J_\ell}} \langle x,y\rangle
	\leq 2 \max_{y\in \mathcal{S}'_{J_\ell}} \langle x,y\rangle\]
	for every $x\in\mathbb{R}^m$.
	When applied to $x=W_{I,v}$ we have that
	\begin{align*}
	\|\proj_{J_\ell} W_{I,v}\|_2
%	&\leq 2 \max_{y\in \mathcal{S}'_{J_\ell}} \langle W_{I,v},y\rangle\\
	&\leq 2 \max_{y\in \mathcal{S}'_{J_\ell}} \sum_{i\in I} \varepsilon_i v_i (A^\ast A y)_i
	\end{align*}
	for every $\ell$.
	Setting $\mathcal{Y}:=\cup_\ell \mathcal{S}'_{J_\ell}$, then $|\mathcal{Y}| \leq \exp(c_1 p)$, because there are at most $\frac{n}{p}+1\leq 2n$ $\ell$'s, and $p\geq \log n$ by assumption.
	Note that the sets $(J_\ell)_\ell$ depend only on $u$.
	Hence the set $\mathcal{Y}$ depends only on $u$ as well, and, in particular, it does not depend on $\varepsilon_{I}$.
	Thus
	\[\big(\E_{\varepsilon_{I^c}} |V_{I^c,u,v}|^p \big)^{1/p}
	\leq c_2 \|u\|_2 \max_{y\in \mathcal{Y}} \sum_{i\in I} \varepsilon_i v_i (A^\ast A y)_i, \]
	which completes the proof of the first step.
	
	{\it Step 2.}
	Let us turn to the randomness originating from $\varepsilon_{I}$.
	Denote by $I=\cup_\ell I_\ell$ the partition of $I$, where $I_1$ consists of the $p$ largest coordinates of $(|v_i|)_{i \in I}$, $I_2$ are the remaining $p$-largest coordinates and so on.
	Then, for fixed but arbitrary $y\in \mathcal{Y}$, it follows from Lemma \ref{lem:bernoulli.interpolation} that
	\begin{align*}
	%\label{eq:V.epsilon.I}
	\Big( \E_{\varepsilon_{I}}  \Big| \sum_{i\in I} \varepsilon_i v_i (A^\ast A y)_i \Big|^p \Big)^{1/p}
	\leq c_3 \|v\|_2 \max_\ell \| \proj_{I_\ell} A^\ast Ay\|_2 .
	\end{align*}
	Next, still for an arbitrary $y\in \mathcal{Y}$,
	\begin{align*}
	\max_\ell\| \proj_{I_\ell} A^\ast Ay \|_2
	&=\max_{x\in \cup_\ell \mathcal{S}_{I_\ell}} \langle x ,  A^\ast Ay\rangle \\
	&\leq \max_{x\in \cup_\ell \mathcal{S}_{I_\ell}}\max_{z\in\cup_{\ell} \mathcal{S}_{J_\ell}} \langle Ax,Az\rangle
	\leq \OD_{I,p}
	\end{align*}
	where the first inequality follows as $\mathcal{Y}\subset \cup_{\ell} \mathcal{S}_{J_\ell}$ and the last inequality follows from the definition of $\OD_{I,p}$ and by noting that 
$$
\cup_\ell \mathcal{S}_{I_\ell}\subset \mathcal{S}_{I,p} \ \ {\rm  and} \ \  \cup_{\ell} \mathcal{S}_{J_\ell}\subset \mathcal{S}_{I^c,p}.
$$
	
Combining the two observations,
	\begin{align*}
	\Big( \E_{\varepsilon_{I}}  \max_{y\in \mathcal{Y}} \Big| \sum_{i\in I} \varepsilon_i v_i (A^\ast A y)_i \Big|^p \Big)^{1/p}
	&\leq  \Big( \sum_{y\in \mathcal{Y} }\E_{\varepsilon_{I}}  \Big| \sum_{i\in I} \varepsilon_i v_i (A^\ast A y)_i \Big|^p   \Big)^{1/p} \\
	&\leq c_4 | \mathcal{Y}|^{1/p}  \|v\|_2\OD_{I,p} \\
	&\leq c_5 \|v\|_2\OD_{I,p},
	\end{align*}	
	where the last inequality holds because $|\mathcal{Y}| \leq \exp(c_1p)$.
	
	{\it Step 3.}
	The proof is completed via an application of Fubini's Theorem:
	\begin{align*}
	\E_{\eps} |V_{I^c,u,v}|^p
	= \E_{\eps_{I}} \E_{\varepsilon_{I^c}} |V_{I^c,u,v}|^p
\leq & \E_{\eps_I} c_2^p \|u\|_2^p \max_{y \in \mathcal{Y}} \Big| \sum_{i\in I} \varepsilon_i v_i (A^\ast A y)_i \Big|^p
\\
\leq & c_2^p \|u\|_2^p c_5^p \|v\|_2^p \OD_{I,p}^p.
\end{align*}
\qedhere
\end{proof}

\subsection{Estimating $\Phi$}

The goal here is to estimate
\[
\Phi=\sup_{t\in T} \sum_{s\geq s_1} \|A_\varepsilon\Delta_s t\|_2
\]
when Assumption \ref{ass:A.deterministic.sparse} is satisfied. Recall that
\[2^{s_1}=\max\{ 2^{s_0}, m \} \geq\log n \]
and that for $u \in \R^n$,
$$
Z_u =\sum_{i,j=1, \,i\neq j}^n \langle Ae_i,A e_j\rangle \varepsilon_i\varepsilon_j u_iu_j.
$$

\begin{proposition}
\label{prop:Phi.estimate}
	There is an absolute constant $c_1>0$ and a constant $c_2=c_2(C_A)$ such that, with probability at least $1-\exp(-c_1 2^{s_1} )$,
	\[ \Phi \leq c_2\frac{ \ell_\ast(T)  }{  \sqrt m }  \log^{1/\alpha}\Big( \frac{en}{ \cdim  }  \Big)    .   \]
\end{proposition}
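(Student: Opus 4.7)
The plan is to run the standard chaining inequality for $\Phi$, replacing the Gaussian increment estimate with the Bernoulli decoupling bound from Corollary~\ref{cor:V.probability.estimate}. Since
\[ \|A_\varepsilon \Delta_s t\|_2^2
= \sum_{i=1}^n \|Ae_i\|_2^2 (\Delta_s t)_i^2 + Z_{\Delta_s t}, \]
Part~(1) of Assumption~\ref{ass:A.deterministic.sparse} controls the diagonal by $(1+\delta)\|\Delta_s t\|_2^2 \leq 2\|\Delta_s t\|_2^2$, so the task reduces to controlling $|Z_{\Delta_s t}|$ uniformly in $t \in T$ and $s \geq s_1$.

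First I would fix the levels $s \geq s_1$ and observe that the set of increments $\{\Delta_s t : t \in T\}$ has cardinality at most $|T_s|\cdot |T_{s+1}| \leq 2^{3\cdot 2^s}$. Since $2^{s_1}\geq \log n$, every level $s\geq s_1$ satisfies $2^s \geq \log n$, so Corollary~\ref{cor:V.probability.estimate} applies with a constant $a$ chosen large enough (e.g.\ $a = 3\log 2 + 1$) to absorb the union bound over all such increments; the failure probability at level $s$ is then at most $\exp(-2^s)$, and summing over $s\geq s_1$ gives a total failure probability bounded by $2\exp(-2^{s_1}) \leq \exp(-c_1 2^{s_1})$. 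On the resulting good event,
\[ \|A_\varepsilon \Delta_s t\|_2^2 \leq 2\|\Delta_s t\|_2^2 + c\,\|\Delta_s t\|_2^2\, \OD_{2^s}
\quad \text{for every } s\geq s_1,\ t\in T. \]

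Next I would take square roots, bound $\sqrt{2+c\OD_{2^s}} \leq \sqrt 2 + \sqrt{c\OD_{2^s}}$, and split the chaining sum accordingly. For the constant contribution, \eqref{eq:mm} yields
\[ \sum_{s\geq s_1} \|\Delta_s t\|_2 \lesssim \frac{\ell_\ast(T)}{\sqrt{2^{s_1}}} \leq \frac{\ell_\ast(T)}{\sqrt m}. \]
For the other term, because $s_1$ is defined so that $2^{s_1} \geq m$, every $s\geq s_1$ satisfies $2^s\geq m$, so Part~(2) of Assumption~\ref{ass:A.deterministic.sparse} gives
\[ \sqrt{\OD_{2^s}} \leq \sqrt{C_A}\,\sqrt{\tfrac{2^s}{m}}\,\log^{1/\alpha}\!\Big(\tfrac{en}{2^s}\Big). \]
Since $2^s\geq 2^{s_1}\geq \cdim$, I can replace $\log^{1/\alpha}(en/2^s)$ by $\log^{1/\alpha}(en/\cdim)$ as a uniform upper bound in $s$. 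Therefore
\[ \sum_{s\geq s_1} \|\Delta_s t\|_2 \sqrt{\OD_{2^s}}
\leq \sqrt{\tfrac{C_A}{m}}\,\log^{1/\alpha}\!\Big(\tfrac{en}{\cdim}\Big) \sum_{s\geq s_1} \sqrt{2^s}\,\|\Delta_s t\|_2
\lesssim \sqrt{C_A}\,\frac{\ell_\ast(T)}{\sqrt m}\log^{1/\alpha}\!\Big(\tfrac{en}{\cdim}\Big), \]
where the last step uses the majorizing measure bound $\sum_{s\geq 0}\sqrt{2^s}\,\|\Delta_s t\|_2 \lesssim \ell_\ast(T)$. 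Combining both contributions, absorbing the constant term using $\log(en/\cdim)\geq 1$, gives $\Phi \leq c_2(C_A)\,\frac{\ell_\ast(T)}{\sqrt m}\log^{1/\alpha}(en/\cdim)$, which is exactly the claim.

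The only subtle point I expect is the book-keeping of the cardinality at each level against the moment parameter $p=2^s$ in Corollary~\ref{cor:V.probability.estimate}. Because the admissible sequence gives only $|T_s|\leq 2^{2^s}$ rather than $\exp(c 2^s)$ with a small $c$, one has to ensure the constant $a$ in the corollary is taken large enough to dominate $3\log 2$ per level, which is why the resulting constant $c_2$ in the final bound ends up depending on $C_A$ (through the Bernoulli estimate) but the probability constant $c_1$ remains absolute.
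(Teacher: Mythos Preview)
Your proposal is correct and follows essentially the same route as the paper's proof: decouple to isolate $Z_{\Delta_s t}$, use Corollary~\ref{cor:V.probability.estimate} together with the cardinality bound $|\{\Delta_s t:t\in T\}|\leq 2^{3\cdot 2^s}$ and a union bound over $s\geq s_1$, then split $\|A_\varepsilon\Delta_s t\|_2$ into a constant-coefficient piece handled by \eqref{eq:mm} and a $\sqrt{\OD_{2^s}}$ piece handled via Part~(2) of Assumption~\ref{ass:A.deterministic.sparse} (in the regime $2^s\geq m$) and the majorizing measure theorem. The only cosmetic difference is that the paper bounds $\log^{2/\alpha}(en/2^s)$ by $\log^{2/\alpha}(en/\cdim)$ before taking the square root, whereas you do so after; and your closing remark that $c_2$ depends on $C_A$ ``through the Bernoulli estimate'' is slightly off---the $C_A$-dependence enters only through Assumption~\ref{ass:A.deterministic.sparse}(2), not through Corollary~\ref{cor:V.probability.estimate}.
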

\begin{proof}
	Following the decoupling argument and invoking Assumption \ref{ass:A.deterministic.sparse}, it is evident that for every $t\in T$ and every $s\geq 0$,
	\[
	\|A_\varepsilon\Delta_s t\|_2^2
	\leq 2 \|\Delta_st\|_2^2  + |Z_{\Delta_st}|.\]
	
	Summing this inequality for $s\geq s_1$, we have, by \eqref{eq:mm}, that
	\begin{align*}
	\Phi
	&\leq c_3 \frac{ \ell_\ast(T) }{ \sqrt{2^{s_1}} } + \sup_{t\in T} \sum_{s\geq s_1} \sqrt{ |Z_{\Delta_st}| } \\
	&=:D_1+D_2.
	\end{align*}
	The choice of $s_1$ implies that $D_1\leq c_3\ell_\ast(t)/\sqrt{m}$.
	
	As for $D_2$, fix  $t\in T$ and $s\geq s_1$.
	 Corollary \ref{cor:V.probability.estimate} states that, with probability at least $1-\exp(-4\cdot 2^{s})$, 	\begin{align}
	\label{eq:Z.Delta.s.single.t}
	|Z_{\Delta_s} t|\leq c_4 \OD_{2^s} \| \Delta_s t\|_2^2.
	\end{align}
	Also,
	\[|\{\Delta_s t : t\in T\}|
	\leq | T_{s+1}| | T_s| 	
	\leq 2^{2^{s+1}+2^{s}}
	= 2^{3\cdot 2^s},\]
	and by the union bound, with probability at least $1-\exp(-c_5 2^{s})$, \eqref{eq:Z.Delta.s.single.t}  holds uniformly for every $t\in T$.
	Another application of the union bound, this time for $s \geq s_1$, and a comparison to an appropriate geometric progression yields that with probability at least
	\[ 1-\sum_{s\geq s_1} \exp(-c_5 2^s)
	\geq 1-\exp(-c_6 2^{s_1}), \]

	\[ |Z_{\Delta_s} t|
	\leq c_4 \OD_{2^s} \| \Delta_s t\|_2^2
	\quad\text{ for every } t\in T \text{ and }s\geq s_1.\]

	In particular, on that event,
	\begin{align}
	\label{eq:phi.help.1}
	D_2
	&\leq  \sqrt{ c_4 }  \sup_{t\in T} \sum_{s\geq s_1}  \sqrt{\OD_{2^s}} \|\Delta_s t\|_2.
	\end{align}
	All that remains is to note that, by Part (2) of Assumption \ref{ass:A.deterministic.sparse}, for every $s\geq s_1$
	\[ \OD_{2^s}
	\leq C_A \frac{2^s}{m} \log^{2/\alpha}\Big( \frac{en}{ \cdim } \Big). \]

	Hence, by the definition of the $\gamma_2$ functional and the majorizing measure theorem, on the event on which \eqref{eq:phi.help.1} holds,
	\[ D_2
	\leq c_7 \sqrt{C_A} \frac{\ell_\ast(T)}{\sqrt{m}} \log^{1/\alpha}\Big( \frac{en}{ \cdim }  \Big).\]
\end{proof}

\subsection{Estimating $\Psi$}

Let us estimate
\[ \Psi^2 = \sup_{t\in T} \big| \|A_\varepsilon \pi_{s_1}t\|_2^2- \|\pi_{s_1}t\|_2^2 \big| \]
when $A$ satisfies Assumption \ref{ass:A.deterministic.sparse} and $2^{s_0}=\cdim\geq \log n$.

\begin{proposition}
\label{prop:Psi.estimate}
	There is an absolute constant $c_1>0$ and a constant $c_2=c_2(C_A)$ such that, with probability at least
	$1-\exp(- c_1 2^{s_0} )$,
	\[ \Psi^2 \leq \delta d_T^2 +  c_2 \Big( \frac{ d_T\ell_\ast(T)}{\sqrt m} +\frac{\ell_\ast^2(T)}{m}\Big)  \log^{2/\alpha}\Big( \frac{en}{ \cdim } \Big)  . \]
\end{proposition}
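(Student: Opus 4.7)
The plan is to reduce the statement, via the decoupling identity of Section \ref{sec:decoupling}, to a Bernoulli chaining argument indexed by the admissible sequence $(T_s)_{s\geq 0}$ chosen for $\gamma_2(T,\|\cdot\|_2)$.

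First, I would use the decoupling identity $\|A_\varepsilon u\|_2^2 = \sum_i \|Ae_i\|_2^2 u_i^2 + Z_u$ applied to $u=\pi_{s_1}t$, together with Part (1) of Assumption \ref{ass:A.deterministic.sparse}, to bound
$$
\Psi^2 \leq \delta d_T^2 + \sup_{t\in T}|Z_{\pi_{s_1}t}|.
$$
So the whole task becomes controlling $\sup_{t\in T}|Z_{\pi_{s_1}t}|$. Introduce the symmetric bilinear form $B(u,v):=\sum_{i\neq j}\varepsilon_i\varepsilon_j \langle Ae_i,Ae_j\rangle u_iv_j$, which satisfies $B(u,u)=Z_u$ and, by the same Fubini/decoupling calculation leading to \eqref{eq:Z.equals.EV}, $B(u,v)=4\E_\eta V_{I_\eta,u,v}$ — so Corollary \ref{cor:V.probability.estimate} controls it directly, without polarization losses. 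Then chain using the telescope
$$
Z_{\pi_{s_1}t}=Z_{\pi_{s_0}t}+\sum_{s_0\leq s<s_1}\bigl(Z_{\Delta_st}+2B(\pi_st,\Delta_st)\bigr).
$$

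Next, I would apply Corollary \ref{cor:V.probability.estimate} (with a large parameter $a$) uniformly over the relevant finite sets of vectors. The base set $\{\pi_{s_0}t:t\in T\}$ has cardinality at most $2^{2^{s_0}}$, and for each $s_0\leq s<s_1$ the set of pairs $(\pi_st,\Delta_st)$ has cardinality at most $2^{3\cdot 2^s}$. With $a$ chosen sufficiently large, a union bound followed by a geometric sum in $s\geq s_0$ yields a joint failure probability of at most $\exp(-c_1 2^{s_0})$, on whose complement
$$
|Z_{\pi_{s_0}t}|\lesssim d_T^2\,\OD_{2^{s_0}}, \quad |Z_{\Delta_s t}|\lesssim \|\Delta_st\|_2^2\,\OD_{2^s}, \quad |B(\pi_st,\Delta_st)|\lesssim d_T\|\Delta_st\|_2\,\OD_{2^s}
$$
for every $t\in T$ and every admissible $s$.

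Finally, on the good event I would invoke Part (2) of Assumption \ref{ass:A.deterministic.sparse}. Since $2^s\leq 2^{s_1}=\max\{k_*(T),m\}$, the logarithmic factor $\log^{2/\alpha}(en/2^s)$ is bounded by $\log^{2/\alpha}(en/k_*(T))$; furthermore, $\max\{\sqrt{2^s/m},\,2^s/m\}$ equals $\sqrt{2^s/m}$ when $2^s\leq m$ and $2^s/m$ otherwise. Using $d_T\sqrt{k_*(T)/m}=\ell_*(T)/\sqrt m$, the base term is absorbed into $(d_T\ell_*(T)/\sqrt m+\ell_*^2(T)/m)\log^{2/\alpha}(en/k_*(T))$; note that when $k_*(T)>m$ the chaining telescope is empty and the entire bound comes from this base term, yielding precisely the $\ell_*^2(T)/m$ summand. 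For the increments (nontrivial when $k_*(T)\leq m$), the majorizing measure theorem gives
$$
\sum_{s\geq s_0} d_T\|\Delta_st\|_2\sqrt{2^s/m}\lesssim \frac{d_T\ell_*(T)}{\sqrt m},
$$
while the quadratic increment is handled by
$$
\sum_{s\geq s_0}\|\Delta_st\|_2^2\sqrt{2^s/m}\leq \frac{1}{\sqrt m}\Bigl(\sup_{s\geq s_0}\sqrt{2^s}\|\Delta_st\|_2\Bigr)\sum_{s\geq s_0}\|\Delta_st\|_2\lesssim \frac{d_T\ell_*(T)}{\sqrt m},
$$
via a double use of \eqref{eq:mm} (in particular, $\sum_{s\geq s_0}\|\Delta_st\|_2\lesssim \ell_*(T)/\sqrt{2^{s_0}}=d_T$).

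The main care point I anticipate is (i) confirming $B(u,v)=4\E_\eta V_{I_\eta,u,v}$ so that Corollary \ref{cor:V.probability.estimate} applies with the correct norms on the \emph{cross} terms, and (ii) unifying the two regimes $k_*(T)\leq m$ and $k_*(T)>m$: in the former the chaining sum produces the $d_T\ell_*(T)/\sqrt m$ summand while the $\ell_*^2(T)/m$ summand is absorbed, whereas in the latter the telescope degenerates and the full strength of $\OD_{2^{s_0}}\lesssim C_A(k_*(T)/m)\log^{2/\alpha}(en/k_*(T))$ in the base case is what delivers the $\ell_*^2(T)/m$ summand.
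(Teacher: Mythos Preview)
Your proposal is correct and follows essentially the same route as the paper: reduce to $\sup_t|Z_{\pi_{s_1}t}|$ via decoupling and Assumption~\ref{ass:A.deterministic.sparse}(1), chain from level $s_1$ down to $s_0$ through the bilinear form, apply Corollary~\ref{cor:V.probability.estimate} at each scale with a union bound, and sum using Assumption~\ref{ass:A.deterministic.sparse}(2) together with the majorizing measure theorem, splitting into the regimes $k_*(T)\leq m$ and $k_*(T)>m$. The only cosmetic difference is your polarization telescope $Z_{\pi_{s+1}t}-Z_{\pi_s t}=Z_{\Delta_s t}+2B(\pi_s t,\Delta_s t)$, which produces an extra diagonal increment $Z_{\Delta_s t}$ (which you correctly dispatch), whereas the paper's Lemma~\ref{lem:chaining.s0.s1} uses the asymmetric expansion $V_{I,\pi_{s+1}t,\pi_{s+1}t}=V_{I,\pi_{s+1}t,\Delta_s t}+V_{I,\Delta_s t,\pi_s t}+V_{I,\pi_s t,\pi_s t}$ and thereby avoids that term altogether.
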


The proof of Proposition \ref{prop:Psi.estimate} is based on the following lemma, from \cite{mendelson2021column} (see Lemma 2.3 there). We present the proof for the sake of completeness.

\begin{lemma} \label{lem:chaining.s0.s1}
	For every $t\in T$ we have that
	\[ |Z_{\pi_{s_1}t}|
	\leq |Z_{\pi_{s_0}t}| + 4\sum_{s=s_0}^{s_1-1} \E_\eta\Big( | V_{I^c_\eta,\Delta_s t, \pi_{s+1}t} | + |V_{I_\eta,\Delta_s t,\pi_st} | \Big) . \]
\end{lemma}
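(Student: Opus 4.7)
The plan is to extend $u \mapsto Z_u$ to the symmetric bilinear form
$Z_{u,v} := \sum_{i \neq j} \langle Ae_i, Ae_j\rangle \varepsilon_i \varepsilon_j u_i v_j$,
which satisfies $Z_{u,u} = Z_u$, and then to telescope along the chain $\pi_{s_0}t, \pi_{s_0+1}t, \dots, \pi_{s_1}t$. Writing $\pi_{s+1}t = \pi_s t + \Delta_s t$ and adding and subtracting $Z_{\pi_s t, \pi_{s+1}t}$ gives the polarization identity
$Z_{\pi_{s+1}t} - Z_{\pi_s t} = Z_{\Delta_s t, \pi_{s+1}t} + Z_{\pi_s t, \Delta_s t} = Z_{\Delta_s t, \pi_{s+1}t} + Z_{\Delta_s t, \pi_s t}$,
the last equality using the symmetry of $Z_{\cdot,\cdot}$. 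Summing for $s = s_0, \dots, s_1 - 1$ and applying the triangle inequality produces
$|Z_{\pi_{s_1}t}| \leq |Z_{\pi_{s_0}t}| + \sum_{s=s_0}^{s_1-1} \bigl( |Z_{\Delta_s t, \pi_{s+1}t}| + |Z_{\Delta_s t, \pi_s t}| \bigr)$.

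The second step is to convert each bilinear $Z$-term into an expectation of $|V|$ by repeating, for the bilinear case, the decoupling calculation from \eqref{eq:Z.equals.EV}: for every $u, v \in \mathbb{R}^n$,
$4\E_\eta V_{I_\eta, u, v} = Z_{u,v} = 4\E_\eta V_{I_\eta^c, u, v}$,
where the second identity uses that $\eta$ and $\mathbf{1} - \eta$ have the same distribution. Applying Jensen's inequality with the $I_\eta^c$-version to $Z_{\Delta_s t, \pi_{s+1}t}$ and with the $I_\eta$-version to $Z_{\Delta_s t, \pi_s t}$ produces precisely the two $V$-terms in the statement, with the asserted factor $4$. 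The argument is entirely algebraic, and the only mild subtlety is that both $V_{I_\eta,\cdot,\cdot}$ and $V_{I_\eta^c,\cdot,\cdot}$ are valid representations of $Z_{\cdot,\cdot}/4$; the statement simply fixes one convenient choice for each of the two cross-terms (presumably because these are the choices that will be most useful when the $V$-norms are estimated in the subsequent proof of Proposition \ref{prop:Psi.estimate}).
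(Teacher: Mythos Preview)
Your proof is correct and follows essentially the same approach as the paper: bilinearity, a telescoping sum along $\pi_{s_0}t,\dots,\pi_{s_1}t$, and the decoupling identity $Z_{u,v}=4\E_\eta V_{I_\eta,u,v}$. The only cosmetic difference is the order of operations---you telescope at the level of the bilinear form $Z_{u,v}$ and then pass to $V$, whereas the paper first passes to $V_{I_\eta,u,u}$ via \eqref{eq:Z.equals.EV}, telescopes using the bilinearity of $V_{I,\cdot,\cdot}$, and at the end rewrites one cross-term using $\E_\eta V_{I_\eta,u,v}=\E_\eta V_{I_\eta^c,v,u}$; the two routes are algebraically equivalent.
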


\begin{remark}
In case that $s_0=s_1$ the sum is on the empty set and the claim trivially true.
\end{remark}

\begin{proof}[Proof of Lemma \ref{lem:chaining.s0.s1}]
Let $t\in T$ and $I\subset\{1,\dots,n\}$. Writing $\pi_{s+1}t=\Delta_st+\pi_st$, it follows from the bi-linearity of $(u,v)\mapsto V_{I,u,v}$ that 	for every $s\geq 0$,
	\begin{align}
	\label{eq:V.bilineraity}
	\begin{split}
	V_{I,\pi_{s+1} t,\pi_{s+1} t }
	&=V_{I,\pi_{s+1} t,\Delta_s t } + V_{I,\pi_{s+1}t, \pi_{s} t}\\
	&=V_{I,\pi_{s+1} t,\Delta_s t } + V_{I,\Delta_s t , \pi_{s} t} + V_{I,\pi_{s}t, \pi_{s} t}.
	\end{split}
	\end{align}
Recall that $Z_u=4\E_\eta V_{I_\eta,u,u}$ for every $u\in\mathbb{R}^n$ (see \eqref{eq:Z.equals.EV}) hence an iterative application of \eqref{eq:V.bilineraity} shows that
	\begin{align*}
	Z_{\pi_{s_1}t}
	&= 4\E_\eta V_{I_\eta,\pi_{s_1} t,\pi_{s_1} t } \\
	&=4 \sum_{s=s_0}^{s_1-1} \E_\eta \Big(   V_{I_\eta,\pi_{s+1} t,\Delta_s t } + V_{I_\eta,\Delta_s t , \pi_{s} t} \Big)  + Z_{\pi_{s_0}t}.
	\end{align*}
Note that by the definition of $V$ and the fact that $\E \eta_i =1/2$, we have $\E_\eta V_{I_\eta,u,v}=\E_\eta V_{I^c_\eta,v,u}$ for every $u,v\in\mathbb{R}^n$.
\end{proof}
	
\begin{proof}[Proof of Proposition \ref{prop:Psi.estimate}]
	By a decoupling argument combined  with Lemma \ref{lem:chaining.s0.s1},
	\begin{align*}
	\Psi^2
	&\leq \sup_{t\in T} \Big( \delta \|\pi_{s_1}t\|_2^2  + |Z_{\pi_{s_1}t}| \Big) \\
	&\leq \delta d_T^2 +\sup_{t\in T} |Z_{\pi_{s_0}t}|
	 + \sup_{t\in T} 4\sum_{s=s_0}^{s_1-1} \E_\eta\Big( | V_{I^c_\eta,\Delta_s t, \pi_{s+1}t} | + | V_{I_\eta,\Delta_s t,\pi_st} | \Big) \\
	&=:\delta d_T^2  + D_1 + D_2.
	\end{align*}
		
	To control $D_1$, one may proceed as in the proof of Proposition \ref{prop:Phi.estimate}:
	by Corollary \ref{cor:V.probability.estimate} and the union bound, with probability at least $1-\exp(-c_3 2^{s_0})$,
	\begin{align}
	\label{eq:prop.V.estimate.Z0}
	D_1
	\leq c_4 d_T^2 \OD_{2^{s_0}}.
	\end{align}
	From this point on, let us distinguish between two cases:
	
	{\it Case 1:} $s_0=s_1$.
	In this case $D_2=0$ and $\cdim \geq m$. By Part (2) of Assumption \ref{ass:A.deterministic.sparse},
	\[ \OD_{2^{s_0}}
	\leq C_A \frac{ \cdim }{m} \log^{2/\alpha}\Big( \frac{en}{  \cdim } \Big).\]
	Therefore, recalling that $\cdim=\ell_\ast^2(T) / d_T^2$, on the event on which \eqref{eq:prop.V.estimate.Z0} holds,
	\[  D_1 \leq c_4 C_A \frac{\ell_\ast^2(T)}{m} \log^{2/\alpha}\Big( \frac{en}{ \cdim } \Big), \]
	as required.
		
	{\it Case 2:} $s_0<s_1$.
	In this case $m > \cdim$, and in particular,
	\[ \OD_{2^{s_0}}
	\leq C_A \sqrt \frac{ \cdim }{m}  \log^{2/\alpha}\Big( \frac{en}{ \cdim } \Big) . \]
	Therefore, on the event on which \eqref{eq:prop.V.estimate.Z0} holds,
	\[	D_1
	\leq c_4 C_A \frac{ d_T\ell_\ast(T) }{\sqrt m}  \log^{2/\alpha}\Big( \frac{en}{ \cdim } \Big). \]
	
	Next, to estimate $D_2$ one may proceed as in the proof of Proposition \ref{prop:Phi.estimate}. Clearly
	\[ |\{(\Delta_s t, \pi_{s+1}t) : t\in T\} |
	\leq 2^{2^{s+2}}2^{2^{s+1}}
	\leq 2^{2^{s+3}},\]
	and invoking Corollary \ref{cor:V.probability.estimate} followed by the union bound, we have that with $\P_\varepsilon$-probability at least
	\begin{align*}
	 1-\sum_{s=s_0}^{s_1-1} \exp(-c_5 2^s)
	&\geq 1-\exp(-c_6 2^{s_0} ), \\
	D_2
	&\leq c_7  \sup_{t\in T}\sum_{s=s_0}^{s_1-1} \|\Delta_s t \|_2 d_T \OD_{2^s}  .
	\end{align*}
	Moreover, by Part (2) of Assumption \ref{ass:A.deterministic.sparse}, for every  $s_0\leq s<s_1$,
	\[ \OD_{2^s}
	\leq C_A  \sqrt \frac{ 2^s }{ m }  \log^{2/\alpha}\Big( \frac{en}{ \cdim } \Big).
	\]
	Therefore, recalling the definition of the $\gamma_2$ functional and the majorizing measure theorem,
	\begin{align*}
	 D_2
	&\leq c_8C_A \frac{ d_T\ell_\ast(T) }{\sqrt m }  \log^{2/\alpha}\Big( \frac{en}{ \cdim } \Big).
	\end{align*}
\end{proof}

\subsection{Proof of Theorem \ref{thm:column.randomization}}
%Observe that
%$\sqrt{\Psi^2 + d_T^2} \leq \sqrt{2} \max\{\Psi,d_T\}$, and therefore,
%$$
%\Phi\sqrt{\Psi^2 + d_T^2} \leq \sqrt{2}(\Phi \Psi + \Phi d_T) \leq \sqrt{2} \left(\frac{\Psi^2 + \Phi^2}{2} + \Phi d_T\right).
%$$
Observe that 
$\sqrt{\Psi^2 + d_T^2} \leq \Psi + d_T$, and therefore,
$$
\Phi\sqrt{\Psi^2 + d_T^2} 
\leq \Phi \Psi + \Phi d_T
\leq \frac{\Psi^2 + \Phi^2}{2} + \Phi d_T.
$$
By Lemma \ref{lem:chaining},
	\begin{align*}
	\sup_{t\in T} \big| \|A_\varepsilon t\|_2^2 - \|t\|_2^2 \big|
	&\leq 2\Psi^2 + 2\Phi d_T + 2\Phi^2 + \sup_{t\in T} \big| \|\pi_{s_1} t\|_2^2 - \|t\|_2^2 \big|,
	\end{align*}
and the claim follows from the estimate on $\Phi$ (Proposition \ref{prop:Phi.estimate}); the estimate on $\Psi$ (Proposition \ref{prop:Psi.estimate}); and the uniform estimate on the error between $\pi_{s_1}t$ and $t$ in \eqref{eq:pit.t.error}.

\subsection{The case $\cdim<\log n$}
\label{rem:critical.dim.log.n}
	The only difference when the standing assumption, that $\cdim\geq \log n$, is not satisfied, occurs in (the proof of) Theorem \ref{thm:column.randomization}.
	When that happens, one starts the chaining processes at
	\[2^{s_0}=\lambda_\ast(T):=\max\{\cdim,\log n\}\]
	 instead of $2^{s_0}=\cdim$ (compare with \eqref{eq:def.s0.s1}).
	 The proof follows the same path with this choice of $s_0$ and for $2^{s_1}=\max\{2^{s_0},m\}$:

\begin{theorem}
\label{thm:column.randomization.cdim.less.than.log.n}
	Suppose that Assumption \ref{ass:A.deterministic.sparse} is satisfied.
	Then there is an absolute constant $c_1$ and a constant $c_2=c_2( C_A)$  such that the following holds.
	
	For every set $T\subset \mathbb{R}^n$, with probability at least $1-\exp(-c_1 \lambda_\ast(T) )$,
	\begin{align*}
	&\sup_{t\in T} \big| \|A_\varepsilon t\|_2^2 -\|t\|_2^2 \big|\\
	&\leq 2\delta d_T^2 + c_2\Big(\frac{d_T \ell_\ast(T) }{ \sqrt{m} }  +\frac{\ell^2_\ast(T) }{ m } + d_T^2 \max\Big\{ \sqrt \frac{\lambda_\ast(T)}{m}, \frac{\lambda_\ast(T)}{m} \Big\} \Big) \log^{2/\alpha}\Big( \frac{en}{ \lambda_\ast(T) }\Big) .
	\end{align*}
\end{theorem}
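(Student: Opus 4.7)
The plan is to follow the proof of Theorem \ref{thm:column.randomization} almost line by line, changing only the starting level of the chaining from $2^{s_0}=\cdim$ to $2^{s_0}=\lambda_\ast(T)=\max\{\cdim,\log n\}$; the stopping level stays as $2^{s_1}=\max\{2^{s_0},m\}$. This shift is forced by the hypothesis $p\geq \log n$ in Proposition \ref{prop:V.norm.estimate} and in Corollary \ref{cor:V.probability.estimate}: all the moment and tail estimates driving the proof fail to give useful probability when $2^s<\log n$. With the new choice one has $2^{s_0}\geq \log n$, and a union bound over the $|T_{s_0}|\leq 2^{2^{s_0}}$ points at the starting level succeeds with probability $1-\exp(-c\lambda_\ast(T))$, which is exactly the probability appearing in the statement.

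The deterministic ingredients---Lemma \ref{lem:chaining} for the decomposition of $\|A_\varepsilon t\|_2^2-\|t\|_2^2$, the tail estimate \eqref{eq:pit.t.error} for $\sup_{t\in T}|\,\|\pi_{s_1}t\|_2^2-\|t\|_2^2\,|$, and Lemma \ref{lem:chaining.s0.s1} telescoping $Z_{\pi_{s_1}t}$---are unaffected by the change of $s_0$ and can be used verbatim. I would then estimate $\Phi$ exactly as in Proposition \ref{prop:Phi.estimate}: every invocation of Corollary \ref{cor:V.probability.estimate} is at scales $s\geq s_1$ which satisfy $2^s\geq \log n$, and the factor $\log^{2/\alpha}(en/\cdim)$ is replaced by the (smaller or equal) $\log^{2/\alpha}(en/\lambda_\ast(T))$ throughout. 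The outcome is
\[
\Phi \leq c(C_A)\,\frac{\ell_\ast(T)}{\sqrt m}\log^{1/\alpha}\Big(\frac{en}{\lambda_\ast(T)}\Big)
\]
with probability at least $1-\exp(-c_1 \lambda_\ast(T))$.

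To estimate $\Psi$ I would follow Proposition \ref{prop:Psi.estimate}; the only substantive change occurs in the initial term $D_1=\sup_{t\in T}|Z_{\pi_{s_0}t}|$. Corollary \ref{cor:V.probability.estimate} combined with a union bound over the $2^{\lambda_\ast(T)}$ points of $T_{s_0}$ gives, with probability at least $1-\exp(-c_3\lambda_\ast(T))$,
\[
D_1\leq c\,d_T^2\,\OD_{2^{s_0}}\leq c(C_A)\,d_T^2\max\Big\{\sqrt{\tfrac{\lambda_\ast(T)}{m}},\tfrac{\lambda_\ast(T)}{m}\Big\}\log^{2/\alpha}\Big(\tfrac{en}{\lambda_\ast(T)}\Big).
\]
In the original proof this term was absorbed into $d_T\ell_\ast(T)/\sqrt m$ or $\ell_\ast^2(T)/m$ via the identity $\cdim=\ell_\ast^2(T)/d_T^2$; once $\lambda_\ast(T)=\log n>\cdim$ that absorption breaks down, so $D_1$ must be kept as the extra summand appearing in the statement. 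The chaining sum $D_2$ is handled exactly as in Case 2 of Proposition \ref{prop:Psi.estimate}, at scales $s_0\leq s<s_1$ (all satisfying $2^s\geq \log n$), producing the familiar $c(C_A)\,d_T\ell_\ast(T)/\sqrt m\cdot\log^{2/\alpha}(en/\lambda_\ast(T))$. Assembling the pieces via $\Phi\sqrt{\Psi^2+d_T^2}\leq(\Psi^2+\Phi^2)/2+\Phi d_T$ as in the proof of Theorem \ref{thm:column.randomization} yields the claim. The main obstacle is the bookkeeping: tracking the $D_1$ contribution through the argument and recognizing that it can no longer be folded into the Gaussian-type terms, thereby becoming the characteristic extra summand of the $\cdim<\log n$ regime.
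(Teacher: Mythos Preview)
Your proposal is correct and matches the paper's approach exactly: the paper states that the proof is identical to that of Theorem~\ref{thm:column.randomization}, the only change being to start the chaining at $2^{s_0}=\lambda_\ast(T)$ instead of $2^{s_0}=\cdim$, with $2^{s_1}=\max\{2^{s_0},m\}$. You have also correctly identified the one place where the argument changes substantively---the initial term $D_1$ in Proposition~\ref{prop:Psi.estimate} can no longer be absorbed via $\cdim=\ell_\ast^2(T)/d_T^2$ and instead yields the extra summand $d_T^2\max\{\sqrt{\lambda_\ast(T)/m},\lambda_\ast(T)/m\}\log^{2/\alpha}(en/\lambda_\ast(T))$.
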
	

Note that when $\lambda_\ast(T)=\cdim$ (i.e., when $\cdim\geq\log n$), then Theorem \ref{thm:column.randomization.cdim.less.than.log.n} and Theorem \ref{thm:column.randomization} coincide.

Because the proof of Theorem \ref{thm:column.randomization.cdim.less.than.log.n} is identical to that of Theorem \ref{thm:column.randomization}, we omit the details.

\section{Structural estimates on the matrix generated by $X$}
\label{sec:structural.estimate}

Here we show that, with a nontrivial probability, the random matrix
\[  A:= \frac{1}{\sqrt{m}}(X_1,\cdots,X_n)\colon\mathbb{R}^n\to\mathbb{R}^m \]
satisfies Assumption \ref{ass:A.deterministic.sparse}.

To simplify notation, set $\mathbb{X}:=(X_i)_{i=1}^n$ and recall, once again, the definitions of
\begin{align*}
\OD_{I,2^s}
&= \max_{x\in\mathcal{S}_{I,2^s}}\max_{y\in\mathcal{S}_{I^c,2^s}} \langle Ax,Ay\rangle \\
&= \max_{x\in\mathcal{S}_{I,2^s}}\max_{y\in\mathcal{S}_{I^c,2^s}} \Big\langle \sum_{i\in I}  x_i \frac{X_i}{\sqrt m} ,  \sum_{j\in I^c} y_j \frac{X_j}{\sqrt m} \Big\rangle,\\
\OD_{2^s}
&=\E_\eta  \OD_{I_\eta,2^s}.
\end{align*}
As always, $\eta$ is a vector consisting of iid selectors with mean $1/2$ that are independent of $\mathbb{X}$  and $I_\eta=\{i : \eta_i=1\}$.

Also, set
\begin{align*}
\ODN_{I,2^s}
&:=\max_{x\in\mathcal{S}_{I,2^s}} \Big\| \sum_{i\in I} x_i \frac{X_i }{\sqrt m} \Big\|_2 ,\\
\ODN_{2^s}
&:=\ODN_{\{1,\dots,n\},2^s}
=\max_{\{x \in S^{n-1} : |{\rm supp}(x)| \leq 2^s\}} \Big\|\sum_{i=1}^n x_i \frac{X_i}{\sqrt{m}} \Big\|_2
\end{align*}
and recall that $X$ suitable with constants $\delta$, $\gamma$, $\alpha$, $R$ and $L$ if
\[ \P_{\mathbb{X}}\Big( \max_{1 \leq i \leq n} \Big| \frac{ \|X_i \|_2^2}{ m } -1 \Big|
\leq \delta \Big) \geq 1-\gamma\]
and for every $x \in \R^m$,
\[  \| \langle X,x\rangle \|_{L_p}
\leq L p^\frac{1}{\alpha}   \| \langle X,x\rangle \|_{L_2}
\quad\text{for } 2\leq p\leq R \cdot \log e n. \]

\begin{theorem}
\label{thm:random.matrix.satisfies.assumption}
	Let $\beta \geq 1$ and assume that $X$ is suitable with constants $\delta$, $\gamma$, $\alpha$, $R=R(\beta)$ and $L$.	
	Then there is a constant $c_1(\alpha,\beta,L)$ such that, with probability at least $1- \gamma - n^{-\beta}$,	for every $1\leq 2^s\leq n$,
	\begin{align*}
	&\OD_{2^s}
	\leq c_1 \max\Big\{ \sqrt \frac{ 2^s   }{ m}  , \frac{ 2^s }{ m} \Big\} \log^{2/\alpha}\Big( \frac{en}{2^s}\Big) .
	\end{align*}
\end{theorem}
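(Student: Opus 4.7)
The plan is to reduce the bound on $\OD_{2^s}$ to a uniform estimate for a mean-zero quadratic form over sparse vectors, and then to control that form via a chaining argument exploiting the $\psi_\alpha$-type tail bound supplied by the suitable condition.

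First, I would exploit the disjoint-support structure in $\OD_{I,2^s}$ via polarization. For $x\in \mathcal{S}_{I,2^s}$ and $y\in\mathcal{S}_{I^c,2^s}$, the identity $2\langle Ax,Ay\rangle=\|A(x+y)\|_2^2-\|Ax\|_2^2-\|Ay\|_2^2$, combined with the additivity of the diagonal $D(z):=\sum_i z_i^2(\|X_i\|_2^2/m-1)$ over disjoint supports, gives
\[
2\langle Ax,Ay\rangle = Q(x+y)-Q(x)-Q(y),\qquad Q(z):=\frac{1}{m}\sum_{i\neq j} z_iz_j\langle X_i,X_j\rangle.
\]
Since $\OD_{2^s}=\E_\eta\OD_{I_\eta,2^s}\le \max_I \OD_{I,2^s}$ and $(x+y)/\sqrt 2$ is a $2^{s+1}$-sparse unit vector, it suffices to prove that $\sup_z|Q(z)|\lesssim \max\{\sqrt{2^s/m},\,2^s/m\}\log^{2/\alpha}(en/2^s)$ over $2^{s+1}$-sparse unit vectors. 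The payoff is that $Q$ is mean-zero and independent of the diagonal $\|X_i\|_2^2$ (already controlled by the thin-shell).

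Second, I would establish a moment bound on $\|Q(z)\|_{L_p}$ at a fixed $2k$-sparse unit $z$ (with $k=2^s$) via the decoupling-by-selectors device used throughout the paper. Writing $mQ(z) = 4\E_\eta\langle \tilde u_{I_\eta},\tilde v_{I_\eta^c}\rangle$ with $\tilde u_I:=\sum_{i\in I}z_iX_i$ and $\tilde v_{I^c}:=\sum_{j\notin I}z_jX_j$, and fixing $\eta$, I condition on $\tilde u_I$: by isotropy and the suitability assumption $\{\langle X_j,\tilde u_I\rangle\}_{j\notin I}$ are i.i.d.\ mean-zero with $L_p$-norm at most $Lp^{1/\alpha}\|\tilde u_I\|_2$, so a $\psi_\alpha$ Bernstein-type inequality yields
\[
\|\langle \tilde u_I,\tilde v_{I^c}\rangle\|_{L_p\mid \tilde u_I}\lesssim L\|\tilde u_I\|_2\bigl(\sqrt p+p^{1/\alpha}\|z\|_\infty\bigr).
\]
On the thin-shell event (of probability $\ge 1-\gamma$) one has $\sum_{i\in I}z_i^2\|X_i\|_2^2\le m(1+\delta)$, and a bootstrap absorbing the (bilinear) off-diagonal contribution to $\|\tilde u_I\|_2^2$ forces $\|\tilde u_I\|_2\lesssim \sqrt m$; combining with Jensen in $\eta$ yields the unconditional estimate
\[
\|Q(z)\|_{L_p}\lesssim \frac{L^2}{\sqrt m}\bigl(\sqrt p+p^{1/\alpha}\|z\|_\infty\bigr)\qquad\text{for }2\le p\le R\log n.
\]

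Finally, I would pass from the pointwise moment bound to the uniform bound on $\sup_z |Q(z)|$ via a peeling-based chaining in the style of \cite{mendelson2014singular}. Decompose the sphere of $2k$-sparse unit vectors according to coordinate magnitude: ``flat'' vectors with $\|z\|_\infty\lesssim 1/\sqrt k$ live in the full sphere (whose metric entropy is $\sim k\log(en/k)$) but come with the small factor $p^{1/\alpha}\|z\|_\infty\lesssim p^{1/\alpha}/\sqrt k$ in the moment bound, while ``peaky'' vectors (few large coordinates) live in sets of much smaller cardinality that tolerate a larger per-point fluctuation. Applying Markov's inequality at $p\sim k\log(en/k)$ and union-bounding over the $\exp(ck\log(en/k))$ candidates per tier gives, with probability $\ge 1-\exp(-c'k\log(en/k))$, the desired
\[
\OD_{2^s}\le 4\sup_z|Q(z)|\le c_1\max\Bigl\{\sqrt{\tfrac{2^s}{m}},\,\tfrac{2^s}{m}\Bigr\}\log^{2/\alpha}\Bigl(\tfrac{en}{2^s}\Bigr);
\]
a final union bound over the $\lesssim\log n$ dyadic scales $s$ (for $R=R(\beta)$ large enough that the compounded failure probabilities are absorbed into $n^{-\beta}$), intersected with the thin-shell event, concludes the proof. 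The main technical obstacle is engineering the peeling so that the correct exponent $\log^{2/\alpha}$ (rather than $\log^{1/\alpha}$) emerges, since the peaky regime $\|z\|_\infty\sim 1$ would otherwise dominate and inflate the bound; peeling compensates by matching each peakiness tier's combinatorial entropy to a corresponding $p^{1/\alpha}\|z\|_\infty$ scale, producing the two-fold logarithmic penalty stemming from the independent $\psi_\alpha$ factors attached to $\tilde u_{I_\eta}$ and $\tilde v_{I_\eta^c}$.
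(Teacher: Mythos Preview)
There is a genuine gap. Your pointwise moment estimate $\|Q(z)\|_{L_p}\lesssim m^{-1/2}(\sqrt p+p^{1/\alpha}\|z\|_\infty)$ is, as you yourself note, only available for $2\le p\le R\log n$: the suitability assumption places no constraint whatsoever on $\|\langle X,x\rangle\|_{L_p}$ for $p>R\log n$, so those moments may be arbitrarily large or infinite, and there is no mechanism by which the conditional Bernstein step can manufacture them. Yet your union bound over a net of $2k$-sparse unit vectors---of cardinality $\exp(ck\log(en/k))$---requires Markov's inequality at $p\sim k\log(en/k)$, and for any $k=2^s$ beyond a logarithmic scale this far exceeds $R\log n$. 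Peeling by $\|z\|_\infty$ does not rescue you: the flat tier alone already carries the full entropy $k\log(en/k)$ and hence forces the full $p$. Your scheme would work if $X$ were genuinely $\psi_\alpha$ (i.e.\ the norm equivalence held for all $p$), but that is a strictly stronger hypothesis than suitability, and the theorem is stated for the weaker one.

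The paper circumvents this obstruction by a structurally different route. Rather than bounding $\sup_z|Q(z)|$ via a large-$p$ moment, it runs an iterative \emph{dimension reduction} (Lemma~\ref{lem:dim.reduction}) that passes from $\OD_{I,2^r}$ to $\OD_{I,2^{r-1}}$ at the cost of an order-statistic term
\[
\max_{y\in\mathcal S'_{I^c,2^r}}\Big(\sum_{i=2^{r-1}+1}^{2^r}\Big\langle X_i,\sum_{j\in I^c}y_j\tfrac{X_j}{\sqrt m}\Big\rangle^{\ast 2}\Big)^{1/2}.
\]
The crucial gain is that the event $\{Z^\ast_{2^r}>v\}$ for $n$ i.i.d.\ copies has probability at most $\binom{n}{2^r}\P(|Z|>v)^{2^r}$: independence supplies the $2^r$-th power, so a single-variable tail estimate at the modest level $p=R\log(en/2^r)\le R\log(en)$ already suffices to beat both the binomial factor and the net of $y$'s (Lemma~\ref{lem:monotone.rearragment.decay}). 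This yields $\OD_{2^s}\lesssim\sqrt{2^s/m}\,\log^{1/\alpha}(en/2^s)\,\ODN_{2^s}$, and the final self-bounding step---$(\ODN_{2^s})^2\le 2+4\OD_{2^s}$ on the thin-shell event---is exactly the bootstrap you sketch for $\|\tilde u_I\|_2\lesssim\sqrt m$. That last idea of yours is sound; the problem is that your argument cannot reach it without first resolving the moment-range issue.
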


\subsection{Proof of Theorem \ref{thm:random.matrix.satisfies.assumption}}

Throughout this section we fix the exponent $\beta$ and assume that $X$ is suitable with an appropriate constant $R=R(\beta)$ for a value that is specified in what follows. 
Also, denote by $(y^\ast_i)_{i\geq 1}$ the monotone nonincreasing rearrangement of $(|y_i|)_{i \in I}$; in particular, $y^\ast_1=\max_{i\in I } |y_i|$ and so on. Hence, for a vector $(y_i)_{i\in I}$,
\begin{align}
\label{eq:sparse.ordering.norm}
\max_{x\in \mathcal{S}_{I,2^s}} \sum_{i\in I} x_i y_i
= \Big( \sum_{i=1}^{2^s} y_i^{\ast 2} \Big)^{ 1/2 }.
\end{align}

\begin{lemma}[Dimension reduction]
\label{lem:dim.reduction}
	There are absolute constants $c_1,c_2$ and, for every $1\leq 2^r\leq n$ and $I\subset\{1,\dots,n\}$, there are sets
	\[\mathcal{S}_{I,2^r}'\subset \mathcal{S}_{I,2^r} \text{ such that } | \mathcal{S}_{I,2^r}'| \leq \exp\Big(c_1 2^r\log \frac{en}{2^r} \Big)\]
	for which the following holds:
	setting
	\begin{align*}
	\mathcal{E}_{I,2^r}
	&:= \max_{y\in \mathcal{S}'_{I^c,2^{r}}}  \frac{1}{\sqrt m} \Big( \sum_{i=2^{r-1}+1}^{2^r} \Big\langle X_i, \sum_{j\in I^c} y_j \frac{X_j}{\sqrt m}  \Big\rangle^{\ast 2}\Big)^{1/2},\\
	\mathcal{F}_{I^c,2^r}&:=\max_{x\in \mathcal{S}'_{I,2^{r-1}}} \frac{1}{\sqrt m} \Big( \sum_{j=2^{r-1}+1}^{2^r} \Big\langle   \sum_{i\in I} x_i \frac{X_i}{\sqrt m} , X_j\Big\rangle^{\ast 2}\Big)^{1/2},
	\end{align*}
	we have that		
	\begin{align}
	\label{eq:alpha.dimension.reduction}
	\OD_{I,2^s}
	&\leq c_2\Big(   \OD_{I,2^0} +  \sum_{r=1}^s \big( \mathcal{E}_{I,2^r} + \mathcal{F}_{I^c,2^r} \big) \Big).
	\end{align}
\end{lemma}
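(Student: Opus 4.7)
The proof will be a two-sided sparse chaining of the bilinear form $\langle Ax, Ay\rangle$ over $x\in\mathcal{S}_{I,2^s}$ and $y\in\mathcal{S}_{I^c,2^s}$. First, I would construct the nets $\mathcal{S}'_{I,2^r}\subset\mathcal{S}_{I,2^r}$ by the standard volumetric / successive approximation estimate (Corollary 4.1.15 of \cite{MR3331351}) applied separately to every subset of $I$ of size $2^r$ and then taking a union. Since there are at most $\binom{|I|}{2^r}\leq (en/2^r)^{2^r}$ such subsets and a net of cardinality $\exp(c_0 2^r)$ suffices on each, one obtains the stated cardinality bound $\exp(c_1 2^r\log(en/2^r))$. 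A key property of this construction, which I would use repeatedly, is that
\[ \max_{v\in\mathcal{S}_{I,2^r}}\langle w,v\rangle\leq 2\max_{v\in\mathcal{S}'_{I,2^r}}\langle w,v\rangle \quad\text{for every } w\in\mathbb{R}^n. \]

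Second, I apply the sparse-rearrangement identity \eqref{eq:sparse.ordering.norm} on the $x$-side: for fixed $y$,
\[ \max_{x\in\mathcal{S}_{I,2^s}}\langle Ax,Ay\rangle = \frac{1}{\sqrt m}\Big(\sum_{i=1}^{2^s}\langle X_i,Ay\rangle^{\ast 2}\Big)^{1/2}, \]
and split the right-hand side dyadically by the triangle inequality in $\ell_2$:
\[ \frac{1}{\sqrt m}\Big(\sum_{i=1}^{2^s}\langle X_i,Ay\rangle^{\ast 2}\Big)^{1/2} \leq \frac{1}{\sqrt m}\max_{i\in I}|\langle X_i,Ay\rangle| + \sum_{r=1}^s\frac{1}{\sqrt m}\Big(\sum_{i=2^{r-1}+1}^{2^r}\langle X_i,Ay\rangle^{\ast 2}\Big)^{1/2}. \]
The blocks $r\geq 1$ already match the summand of $\mathcal{E}_{I,2^r}$; the remaining task is to reduce the $y$-maximum to the net $\mathcal{S}'_{I^c,2^r}$.

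Third, I chain in $y$ via the dyadic truncations $\pi_r y$ (the restriction of $y$ to its top $2^r$ coordinates in absolute value). Approximating the normalized $\pi_r y$ inside $\mathcal{S}'_{I^c,2^r}$, the contribution of the body gives $\mathcal{E}_{I,2^r}$. For the tail $y-\pi_r y$ (whose active coordinates in $I^c$ rank $2^r+1$ and lower in $y$), I use the symmetric version of the same argument: apply \eqref{eq:sparse.ordering.norm} on the $I^c$-side with $x$ replaced by a net approximation in $\mathcal{S}'_{I,2^{r-1}}$, and dyadically split the resulting $\ell_2$ sum over $j\in I^c$. This produces exactly the summand of $\mathcal{F}_{I^c,2^r}$, the level-shift from $2^r$ on the $\mathcal{E}$ side to $2^{r-1}$ on the $\mathcal{F}$ side arising naturally from the cross-telescoping of the two chains. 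The remaining $r=0$ peak term, after a further symmetric reduction of the $y$-max down to a single coordinate of $I^c$, is absorbed into $\mathcal{O}_{I,2^0}$.

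The main obstacle is not analytical but careful index bookkeeping: one has to verify that the two intertwined telescopings on $x$ and $y$ pair up correctly, so that the cross increments land exactly inside $\mathcal{E}_{I,2^r}$ and $\mathcal{F}_{I^c,2^r}$ as defined (in particular, that the asymmetric sparsity shift by one dyadic level is consistent with the triangle-inequality splitting on the opposite side). Once the double chain is set up consistently, \eqref{eq:alpha.dimension.reduction} follows by collecting all telescoping contributions, with the constant $c_2$ absorbing the multiplicative factors of $2$ coming from the net-domination property.
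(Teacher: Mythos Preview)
Your net construction in the first paragraph is correct and matches the paper. The overall scheme, however, differs from the paper's and contains a gap in Step~3 that does not close as written.

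The paper does \emph{not} split the $x$-side into all dyadic blocks at once. It proves a one-step recursion
\[
\OD_{I,2^r}\;\le\;\frac{1}{(1-\delta_{2^r})(1-\delta_{2^{r-1}})}\Big(\OD_{I,2^{r-1}}+\mathcal{E}_{I,2^r}+\mathcal{F}_{I^c,2^r}\Big),
\qquad \delta_{2^r}:=\Big(\frac{2^r}{en}\Big)^{c_3},
\]
and iterates from $r=s$ down to $r=1$. One step goes: (i) pass to the net on the $y$-side at level $2^r$; (ii) use \eqref{eq:sparse.ordering.norm} on the $x$-side and split off the lower half of the ordered sum---this is exactly $\mathcal{E}_{I,2^r}$ because $y$ is \emph{already} in $\mathcal{S}'_{I^c,2^r}$; (iii) the upper half returns $\max_{y\in\mathcal{S}_{I^c,2^r}}\max_{x\in\mathcal{S}_{I,2^{r-1}}}\langle Ax,Ay\rangle$; (iv) now pass to the net on the $x$-side at level $2^{r-1}$, apply \eqref{eq:sparse.ordering.norm} on the $y$-side, split off the lower half---this is $\mathcal{F}_{I^c,2^r}$ because $x$ is already in $\mathcal{S}'_{I,2^{r-1}}$---and the upper half is $\OD_{I,2^{r-1}}$. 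The accuracies $\delta_{2^r}$ are summable, so the product $\prod_r(1-\delta_{2^r})^{-2}$ is an absolute constant; this is where $c_2$ comes from.

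In your scheme you fix $y\in\mathcal{S}_{I^c,2^s}$ and decompose the whole $x$-rearrangement into blocks $r=1,\dots,s$ simultaneously. The problem is that each block
\[
\frac{1}{\sqrt m}\Big(\sum_{i=2^{r-1}+1}^{2^r}\langle X_i,Ay\rangle^{\ast 2}\Big)^{1/2}
\]
still carries a $2^s$-sparse $y$, not a $2^r$-sparse one, so it is not dominated by $\mathcal{E}_{I,2^r}$. Your remedy, splitting $y=\pi_r y+(y-\pi_r y)$, fails because the tail-block functional $a\mapsto(\sum_{i=2^{r-1}+1}^{2^r}a_i^{\ast 2})^{1/2}$ is \emph{not} subadditive in $a$: for $a=e_1+e_2$ with block $\{2\}$ one gets $1$ on the left and $0+0$ on the right. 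Hence there is no triangle inequality available to separate the body $\pi_r y$ from the tail, and the ``cross-telescoping'' you describe cannot be executed. If instead you bound the block by the full norm $\max_{x\in\mathcal{S}_{I,2^r}}\langle Ax,Ay\rangle$ (which \emph{is} sublinear in $y$), the tail term becomes $\max_{x\in\mathcal{S}_{I,2^r}}\langle Ax,A(y-\pi_r y)\rangle$, with $y-\pi_r y$ still of sparsity $2^s-2^r$; this is not $\mathcal{F}_{I^c,2^r}$ and needs its own recursion on the $y$-side---at which point you are essentially re-deriving the paper's alternating scheme anyway.

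A related issue: factor-$2$ nets used multiplicatively across $s$ recursion steps give $4^s$, not an absolute constant. This is precisely why the paper takes $\delta_{2^r}$-nets with $\sum_r\delta_{2^r}<\infty$. Your intended use is additive, which would be fine, but since Step~3 does not close additively this point also needs attention.
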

	Let us stress that, in the definition of $\mathcal{E}_{I,2^r}$, the monotone nonincreasing rearrangement is only with respect to coordinates in $I$.
	That is, the sum appearing in $\mathcal{E}_{I,2^r}$ equals $\sum_{i=2^{r-1}+1}^{2^r} y_i^{\ast 2}$ where $(y^\ast_i)_{i\geq 1}$ is the monotone nonincreasing rearrangement of the vector $(|y_i|)_{i\in I}$ defined by $y_i:=\langle X_i, \sum_{j\in I^c} y_j \frac{X_j}{\sqrt m} \rangle$ for $i\in I$.
	Similarly,  the monotone nonincreasing rearrangement in $\mathcal{F}_{I^c,2^r}$ is only with respect to coordinates in $I^c$.
\begin{proof}
	Let $c_3\geq 2$ be an absolute constant to be chosen later and set
	\[ \delta_{2^r}:=\Big( \frac{2^r}{en} \Big)^{c_3} \]
	for $1\leq 2^r\leq n$.
	Observe that there are $\binom{n}{2^r}$ subsets of $\{1,...,n\}$ of cardinality $2^r$. Therefore, by a standard volumetric estimate and a successive approximation argument, there are sets $\mathcal{S}_{I,2^r}'\subset \mathcal{S}_{I,2^r}$ of cardinality as at most
	\[ \exp\Big( 2^r\log \frac{en}{2^r\delta_{2^r}} \Big)
	= \exp\Big( (1+c_3) 2^r\log \frac{en}{2^r} \Big) \]
	such that
	\[\max_{x\in \mathcal{S}_{I,2^r}} \langle x,y\rangle
	\leq \frac{1}{1-\delta_{2^r}} \max_{x\in \mathcal{S}_{I,2^r}'} \langle x,y\rangle\]
	for every $y\in\mathbb{R}^n$.
	The analogous statement holds for $\mathcal{S}_{I^c,2^r}$.
	
	By the definition of $\OD_{I,2^r}$ and  interchanging the order of two maxima,
	\begin{align*}
	\OD_{I,2^r}
	&= \max_{x\in \mathcal{S}_{I,2^r}} \max_{y\in \mathcal{S}_{I^c,2^r}}
\langle A^\ast Ax,y\rangle
\leq \max_{x \in \mathcal{S}_{I,2^r}} \frac{1}{1-\delta_{2^{r}}} \max_{y\in \mathcal{S}_{I^c,2^r}'}\langle A^\ast Ax,y\rangle
\\
	&=  \frac{1}{1-\delta_{2^{r}}} \max_{y\in \mathcal{S}_{I^c,2^r}'} \max_{x\in \mathcal{S}_{I,2^r}} \langle Ax,Ay\rangle \\
	&=	\frac{1}{1-\delta_{2^{r}}}   \max_{y\in \mathcal{S}'_{I^c,2^r}} \frac{1}{\sqrt m} \cdot  \max_{x\in\mathcal{S}_{I,2^r}} \sum_{i\in I} x_i \Big\langle  X_i,\sum_{j\in I^c} y_j \frac{ X_j }{\sqrt m} \Big\rangle.
	\end{align*}
	Applying \eqref{eq:sparse.ordering.norm} to the last term, it is evident that
	\begin{align*}
	\OD_{I,2^r}
	&\leq \frac{1}{1-\delta_{2^{r}}}   \max_{y\in \mathcal{S}'_{I^c,2^r}} \frac{1}{\sqrt m} \cdot \Big(\sum_{i=1}^{2^r} \Big\langle  X_i,\sum_{j\in I^c} y_j \frac{ X_j}{\sqrt m}\Big\rangle^{\ast 2}\Big)^{ 1/2 }.
	\end{align*}
Splitting the sum over $i=1,\dots,2^r$ into a sum over $\{1,...,2^{r-1}\}$ and  over $\{2^{r-1}+1,...,2^r\}$,
	\begin{align*}
	&\max_{y\in \mathcal{S}'_{I^c,2^r}} \frac{1}{\sqrt m} \cdot \Big(\sum_{i=1}^{2^r} \Big\langle  X_i,\sum_{j\in I^c} y_j \frac{ X_j}{\sqrt m}\Big\rangle^{\ast 2}\Big)^{ 1/2 } \\
	&\leq \max_{y\in \mathcal{S}'_{I^c,2^r}} \frac{1}{\sqrt m} \cdot \Big(\sum_{i=1}^{2^{r-1}} \Big\langle  X_i,\sum_{j\in I^c} y_j \frac{ X_j}{\sqrt m}\Big\rangle^{\ast 2}\Big)^{ 1/2 } + \mathcal{E}_{I,2^r}.
	\end{align*}
Using \eqref{eq:sparse.ordering.norm} once again,
	\begin{equation*}
	\OD_{I,2^r}
	\leq\frac{1}{1-\delta_{2^{r}}} \Big( \max_{y\in \mathcal{S}'_{I^c,2^r}} \max_{x\in\mathcal{S}_{I,2^{r-1}}} \Big\langle \sum_{i\in I} x_i \frac{X_i}{\sqrt m} ,\sum_{j\in I^c}y_j \frac{ X_j }{\sqrt m}\Big\rangle + \mathcal{E}_{I,2^r}\Big).
	\end{equation*}
	
	Applying the same arguments (with the roles of $x$ and $y$ reversed), we obtain
	\begin{align*}
	\max_{y\in \mathcal{S}_{I^c,2^r}} \max_{ x \in\mathcal{S}_{I,2^{r-1}}} \Big\langle \sum_{i\in I} x_i \frac{X_i}{\sqrt m } , \sum_{j\in I^c} y_j \frac{ X_j }{\sqrt m }\Big\rangle
	&\leq \frac{1}{1-\delta_{2^{r-1}}}\Big( \OD_{I,2^{r-1}} + \mathcal{F}_{I^c,2^r} \Big)
%	&\leq \frac{1}{1-\delta_{2^{r-1}}}\Big( \OD_{I,2^{r-1}} + \max_{x\in \mathcal{S}'_{I,2^{r-1}}} \frac{1}{\sqrt m} \Big( \sum_{j=2^{r-1}+1}^{2^r} \Big\langle   \sum_{i\in I} x_i \frac{X_i}{\sqrt m} , X_j\Big\rangle^{\ast 2}\Big)^{1/2} \Big)
	\end{align*}
	and combining the two estimates,
	\begin{align}
	\label{eq:dim.reduction.proof}
	\OD_{I,2^r}
	\leq  \frac{1}{1-\delta_{2^{r}}} \frac{1}{1-\delta_{2^{r-1}}} \Big( \OD_{I,2^{r-1}} + \mathcal{E}_{I,2^r} + \mathcal{F}_{I^c,2^r} \Big).
	\end{align}
	
	Finally note that
	\[ \prod_{1\leq 2^r\leq n} (1-\delta_{2^r})	(1-\delta_{2^{r-1}})
	\geq  c_4, \]
	for a constant $c_4=c_4(c_3)>0$.
	Hence \eqref{eq:dim.reduction.proof} can be applied iteratively starting with $r=s$ and until $r=1$---thus completing the proof.
\end{proof}

In order to derive probabilistic estimates on the terms that appear in Lemma \ref{lem:dim.reduction}, the following (standard) observation is required:
\begin{lemma}
\label{lem:monotone.rearragment.decay}
	Let $c_1$ and $c_2$ be constants.
	Then there are constants $R=R(c_1,c_2,\beta)$ and $c_3=c_3(c_1,c_2,\alpha,\beta, L)$  such that the following holds.
	Let $1\leq 2^s \leq n$ and let $\mathcal{Z}$ be a collection of random variables satisfying
	\begin{align*}
	|\mathcal{Z}|
	&\leq \exp\Big( c_1 2^s \log\frac{en}{2^s} \Big),\\
	\| Z \|_{L_p}
	&\leq  L p^{1/\alpha} \| Z \|_{L_2}
	\quad \text{for every } 2\leq p\leq  R \cdot \log en \text{ and every }Z\in\mathcal{Z}.
	\end{align*}
	Then, for every $u\geq e$, with probability at least
	\[1-\exp\Big( - c_2\beta  (\log u ) \cdot  2^s \log\frac{en}{2^s}\Big),\]
	\[ \max_{Z\in\mathcal{Z}} Z^\ast_{2^s} \leq c_3   u  \log^{1/\alpha}\Big( \frac{en}{2^s}\Big) \max_{Z\in\mathcal{Z}} \| Z \|_{L^2};\]
	here $(Z_i)_{i=1}^n$ denote independent copies of each random variable $Z\in\mathcal{Z}$.
\end{lemma}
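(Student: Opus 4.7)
The plan is to control $\P(Z^\ast_{2^s}\geq \tau)$ for a fixed $Z\in\mathcal{Z}$ by combining a binomial union bound with a tail bound derived from the $L_p$-$L_2$ equivalence, and then to take a further union bound over $\mathcal{Z}$.

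First, the event $\{Z^\ast_{2^s}\geq \tau\}$ means that at least $2^s$ of the iid copies satisfy $|Z_i|\geq \tau$, so a union bound over all $2^s$-element subsets of $\{1,\dots,n\}$, combined with independence and $\binom{n}{2^s}\leq (en/2^s)^{2^s}$, gives
\[
\P(Z^\ast_{2^s}\geq \tau)\leq \binom{n}{2^s}\P(|Z_1|\geq \tau)^{2^s}\leq \exp\Big( 2^s \log\frac{en}{2^s}\Big)\P(|Z_1|\geq \tau)^{2^s}.
\]
I will pick $p=C\log(en/2^s)$ for a constant $C=C(c_1,c_2,\beta)$ to be chosen later, and apply Markov's inequality together with the $L_p$-$L_2$ equivalence:
\[
\P(|Z_1|\geq \tau)\leq\Big( \frac{L p^{1/\alpha}\|Z\|_{L_2}}{\tau}\Big)^p.
\]
The choice $\tau=c_3 u\log^{1/\alpha}(en/2^s)\,\max_{Z\in\mathcal{Z}}\|Z\|_{L_2}$ is tailored so that the factors $\log^{1/\alpha}(en/2^s)$ and $\|Z\|_{L_2}$ cancel, leaving the uniform bound $\P(|Z_1|\geq\tau)\leq (LC^{1/\alpha}/(c_3 u))^p$ for every $Z\in\mathcal{Z}$. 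Requiring $R\geq C$ guarantees that $p\leq R\log en$, so the hypothesis of the lemma legitimately applies at this $p$.

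Combining the two displays and taking a union bound over the at most $\exp(c_1 2^s \log(en/2^s))$ elements of $\mathcal{Z}$ yields
\[
\P\Big(\max_{Z\in\mathcal{Z}} Z^\ast_{2^s}\geq \tau\Big)\leq\exp\Big( 2^s\log\frac{en}{2^s}\Big[(c_1+1)-C\log\frac{c_3 u}{LC^{1/\alpha}}\Big]\Big).
\]
The main (and only) step remaining is a two-parameter balancing: the inner bracket must be bounded above by $-c_2\beta\log u$ for every $u\geq e$. I would first fix $c_3=c_3(c_1,c_2,\alpha,\beta,L)$ so large that $c_3/(LC^{1/\alpha})\geq e$, which forces $C\log(c_3 u/(LC^{1/\alpha}))\geq C+C\log u$, and then demand $C\geq c_1+1+c_2\beta$; the first inequality absorbs the constant $c_1+1$ while the coefficient of $\log u$ dominates $c_2\beta$. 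Setting $R:=C$, which depends only on $c_1,c_2,\beta$, delivers the stated probability estimate. I expect no serious obstacle beyond this balancing; the conceptual point is that the $L_p$-$L_2$ hypothesis is used at precisely the logarithmic scale $p\sim\log(en/2^s)$ that matches the volumetric growth $\binom{n}{2^s}$.
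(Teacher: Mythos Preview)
Your proposal is correct and follows essentially the same approach as the paper's proof: the paper likewise bounds $\P(Z^\ast_{2^s}\geq vU)$ via the binomial estimate $\binom{n}{2^s}\P(|Z|\geq vU)^{2^s}$, applies Markov at $p=R\log(en/2^s)$ with $R=c_1+c_2\beta+1$ and $v=Lp^{1/\alpha}u$ so that $\P(|Z|\geq vU)\leq u^{-p}$, and then takes the union bound over $\mathcal{Z}$. Your choice $C\geq c_1+1+c_2\beta$ and $c_3\geq eLC^{1/\alpha}$ is the same balancing; the only cosmetic difference is that the paper fixes $v$ directly rather than first naming $c_3$.
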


\begin{proof}
	Set $U:=\max_{Z\in\mathcal{Z}} \| Z \|_{L_2}$ and let $v\geq 0$.
	Using the union bound, a binomial estimate, and the independence  $(Z_i)_{i=1}^n$ for each $Z$, it follows that
	\begin{align}
	\nonumber
	&\P\Big(\max_{Z\in\mathcal{Z}} Z^\ast_{2^s} \geq v U \Big) \\
	\nonumber
	&\leq \sum_{Z\in\mathcal{Z}} \P\Big( \text{there is } I\subset\{1,\dots,n\} \text{ s.t.\ } |I|={2^s} \text{ and } |Z_i|\geq v  U \text{ for all  } i\in I \Big) \\
	\nonumber	
	&\leq  \sum_{Z\in\mathcal{Z}} \binom{n}{{2^s}} \P^{2^s} \big( |Z|\geq v  U \big) \\
	\label{eq:decreasing.order.estimate}	
	&\leq \sum_{Z\in\mathcal{Z}} \exp\Big( {2^s} \log\frac{en}{{2^s}} + 2^s \log \P( |Z|\geq v  U )\Big).
	\end{align}
	
	Set
	\[R:=c_1+c_2\beta+1
	\quad\text{and}\quad
	p:=R\log \frac{en}{{2^s}},\]
	and assume without loss of generality  that $p\geq 2$.
	Then, for $v:=L p^{1/\alpha} u$, Markov's inequality and the assumption on the growth of moments imply that
	\[ \max_{Z\in\mathcal{Z}} \P\big( |Z|\geq v U \big)
	\leq \Big(\frac{ p^{1/\alpha} U }{ v U }\Big)^p
	= u^{-p}.\]
The claim follows from \eqref{eq:decreasing.order.estimate} because $v\leq c_3(c_1,c_2,\alpha,\beta, L) u \log^{1/\alpha}(\frac{en}{{2^s}})$.
\end{proof}

\begin{lemma}
\label{lem:estimate.alpha.2s}
	There is a constant $c_1=c_1(\alpha, \beta,L)$ such that, for every $1\leq 2^s\leq n$, with probability $\P_{\mathbb{X}}$ at least $1-n^{-2\beta}$,
	\[  \OD_{2^s} \leq c_1 \sqrt{ \frac{2^s}{m} } \log^{1/\alpha}\Big( \frac{en}{2^s} \Big) \ODN_{2^s}.  \]
\end{lemma}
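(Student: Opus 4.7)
The plan is to combine the dimension reduction (Lemma \ref{lem:dim.reduction}) with the monotone rearrangement estimate (Lemma \ref{lem:monotone.rearragment.decay}), applied dyadically. Fix $I\subseteq\{1,\dots,n\}$ and apply Lemma \ref{lem:dim.reduction}:
\[ \OD_{I,2^s} \;\lesssim\; \OD_{I,1} + \sum_{r=1}^{s}\big(\mathcal{E}_{I,2^r} + \mathcal{F}_{I^c,2^r}\big). \]
Because positions $2^{r-1}+1,\dots,2^r$ in a decreasing rearrangement are each at most the $(2^{r-1}+1)$-st,
\[ \mathcal{E}_{I,2^r} \;\leq\; \tfrac{\sqrt{2^{r-1}}}{\sqrt m}\, \max_{y\in\mathcal{S}'_{I^c,2^r}} \langle X_i,v_y\rangle^\ast_{2^{r-1}+1}, \qquad v_y := \tfrac{1}{\sqrt m}\sum_{j\in I^c} y_j X_j, \]
with the rearrangement running over $i\in I$; an analogous reduction applies to $\mathcal{F}_{I^c,2^r}$ by swapping the roles of $I$ and $I^c$.

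The next step is to apply Lemma \ref{lem:monotone.rearragment.decay} conditionally on $X_{\mathrm{supp}(y)}$. Under this conditioning $v_y$ is deterministic and the variables $\langle X_i,v_y\rangle$, $i\notin\mathrm{supp}(y)$, are iid with $\|\langle X,v_y\rangle\|_{L_p}\leq Lp^{1/\alpha}\|v_y\|_2$ for $2\leq p\leq R\log(en)$, thanks to the suitability of $X$. Applied to the class $\{\langle X,v_y\rangle:y\in\mathcal{S}'_{I^c,2^r}\}$ (of cardinality $\exp(c_1 2^r\log(en/2^r))$, independent of $I$) and using that $\|v_y\|_2\leq \ODN_{I^c,2^r}\leq\ODN_{2^s}$, the lemma yields, on a $\P_\mathbb{X}$-event of probability at least $1-n^{-(2\beta+1)}$,
\[ \mathcal{E}_{I,2^r} \;\leq\; c\,\sqrt{\tfrac{2^{r-1}}{m}}\,\log^{1/\alpha}\!\Big(\tfrac{en}{2^{r-1}}\Big)\,\ODN_{2^s},\qquad c=c(\alpha,\beta,L), \]
and similarly for $\mathcal{F}_{I^c,2^r}$. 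The residual term $\OD_{I,1}=\max_{i\in I,j\in I^c}|\langle X_i,X_j\rangle|/m$ is handled by a direct moment computation: conditioning on $X_j$ gives $\|\langle X_i,X_j\rangle\|_{L_p}\leq Lp^{1/\alpha}\|X_j\|_2$, and combining with the thin-shell bound on $\|X_j\|_2$ and a union bound over $n^2$ pairs yields $\OD_{I,1}\lesssim\log^{1/\alpha}(en)/\sqrt m$, which fits the $r=0$ slot of the dyadic sum.

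A final union bound over $r=0,1,\dots,s\leq\log_2 n$ keeps the overall failure probability below $n^{-2\beta}$, and on the complementary good event the sum over $r$ is a geometric series dominated by its largest term, producing the target $c\sqrt{2^s/m}\,\log^{1/\alpha}(en/2^s)\,\ODN_{2^s}$. Because both the probability estimate and the resulting deterministic bound are independent of $I$---either by working with a global net of all $2^r$-sparse unit vectors in $\R^n$ or by a Fubini / $L_q(\mathbb{X})$-Markov argument applied to the $I_\eta$-process---the inequality passes through the outer expectation $\E_\eta$ and yields the claim for $\OD_{2^s}$. The main technical hurdle is the factor-of-two slack between the net level $2^r$ and the order-statistic level $2^{r-1}$ when invoking Lemma \ref{lem:monotone.rearragment.decay}; this is absorbed by inflating the constant $c_1$ in that lemma, which in turn dictates how large the suitability parameter $R=R(\beta)$ must be chosen.
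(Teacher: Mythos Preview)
Your outline matches the paper's proof: dimension reduction via Lemma \ref{lem:dim.reduction}, control of each $\mathcal{E}_{I,2^r}$ and $\mathcal{F}_{I^c,2^r}$ through Lemma \ref{lem:monotone.rearragment.decay} applied conditionally, a geometric sum in $r$, and then the passage from fixed $I$ to $\E_\eta$---the paper carries this last step out exactly via your second option, the $L_q(\mathbb X)$--Jensen--Markov route with $q=2\beta\log n$.

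Two small corrections. First, condition on $(X_j)_{j\in I^c}$ rather than on $X_{\mathrm{supp}(y)}$: the latter depends on $y$, whereas you need a single $\sigma$-algebra that renders every $v_y$ in the net deterministic at once. Second, do not invoke the thin-shell bound for $\OD_{I,1}$; that would insert an extra $\gamma$ into the failure probability which the lemma statement does not allow. The same conditional argument as for general $r$ gives $\OD_{I,1}\lesssim m^{-1/2}\log^{1/\alpha}(en)\,\ODN_{2^0}\leq m^{-1/2}\log^{1/\alpha}(en)\,\ODN_{2^s}$ directly, since $\|X_j\|_2/\sqrt m\leq \ODN_{2^0}$. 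Finally, for the $L_q$--Markov step to go through you need the full $u$-dependent tail of Lemma \ref{lem:monotone.rearragment.decay} (then tail-integrate to get a uniform-in-$I$ moment bound), not just a single probability level $1-n^{-(2\beta+1)}$; otherwise there is nothing to push through $\E_\eta$ via Jensen.
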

\begin{proof}
Following the notation of Lemma \ref{lem:dim.reduction}, one may integrate  with respect to $\eta$ Inequality \eqref{eq:alpha.dimension.reduction}, obtained in Lemma \ref{lem:dim.reduction}. Hence,
	\[ \OD_{2^s}
	\leq c_2\Big( \OD_{2^0} + \sum_{r=1}^s \big( \E_\eta \mathcal{E}_{I_\eta,2^r} + \E_\eta \mathcal{F}_{I^c_\eta,2^r} \big) \Big).\]
We shall only estimate the terms $\E_\eta \mathcal{E}_{I_\eta,2^r}$; the required bounds on $\OD_{2^0}$ and on $\E_\eta \mathcal{F}_{I^c_\eta,2^r}$ are based on the same argument and are omitted.

Let us show that, with $\P_{\mathbb{X}}$-probability at least $1-n^{-2\beta}$,
	\[ \sum_{r=1}^s \E_\eta  \mathcal{E}_{I_\eta,2^r}
	\leq c_1(\alpha,\beta,L) \sqrt{ \frac{2^s}{ m } } \log^{1/\alpha}\Big( \frac{en}{2^s} \Big) \ODN_{2^s} .\]
	
	{\it Step 1.} Fix $I\subset\{1,\dots,n\}$.
	Then, for $1\leq 2^r\leq n$,
	\[ \mathcal{E}_{I,2^r}
	\leq  \max_{y\in \mathcal{S}'_{I^c,2^{r}}} \sqrt \frac{  2^r }{ m }  \Big\langle X_i, \frac{1}{\sqrt m } \sum_{j\in I^c} y_jX_j\Big\rangle^{\ast}_{2^{r-1}}.\]
Indeed, this holds by replacing all the terms in $\{2^{r-1}+1,\dots,2^r\}$ by the largest one.

Consider the set 	
\[
\mathcal{Z}:=\Big\{  \Big\langle X, \frac{1}{\sqrt m } \sum_{j\in I^c} y_jX_j\Big\rangle : y\in \mathcal{S}'_{I^c,2^{r}} \Big\}.
\]
Note that $\mathcal{Z}$ consist of at most $|\mathcal{S}'_{I^c,2^{r}}|$ elements, i.e., it is of cardinality at most $\exp(c_3 2^r\log\frac{en}{2^r})$ (see Lemma \ref{lem:dim.reduction}). Moreover, for every $Z \in \mathcal{Z}$ and every $2\leq p \leq R\log en$,
	\[
\| Z \|_{L_p}
	\leq L p^\frac{1}{\alpha} \| Z \|_{L_2}
	\leq L p^\frac{1}{\alpha} \ODN_{I^c,2^r}.
\]

Applying Lemma \ref{lem:monotone.rearragment.decay} (conditionally on $(X_i)_{i \in I^c}$), and for an absolute constant $c_4$ to be chosen later, there are constants $R=R(\beta,c_3,c_4)$ and $c_5=c_5(c_3,c_4,\alpha,\beta,L)$ such that
	\begin{align}
	\label{eq:alpha.estimate}
	\begin{split}
	&\P_{X_{I}}\Big( \mathcal{E}_{I,2^r} \geq  c_5 u \sqrt{ \frac{2^r}{ m } } \log^{1/\alpha}\Big(\frac{en}{2^r} \Big)\ODN_{I^c,2^r}  \Big) \\
	&\leq \exp\Big(-c_4 \beta (\log u ) 2^r \log\frac{en}{2^r} \Big)
	\end{split}
	\end{align}
	for every $u\geq e$.
	
	Since $\ODN_{I^c,2^r}\leq \ODN_{2^r}$ for every realization of $(X_i)_{i=1}^n$, it follows from Fubini's Theorem that one may replace  $\P_{X_I}$ and $\ODN_{I^c,2^r}$ in \eqref{eq:alpha.estimate} by $\P_{\mathbb{X}}$ and $\ODN_{2^r}$, respectively.
	In particular, by the union bound, with $\P_{\mathbb{X}}$-probability at least
	\begin{align*}
	1-\sum_{r=1}^s \exp\Big( - c_4 \beta (\log u)  2^r \log\frac{en}{2^r} \Big)
	&\geq 1- \exp\big( -c_4c_6 \beta (\log u)  \log n \big) \\
	&= 1- u^{-c_4c_6\beta\log n} ,
	\end{align*}
we have that
	\begin{align*}
	\sum_{r=1}^s \mathcal{E}_{I,2^r}
	&\leq c_5 \sum_{r=1}^s u \sqrt{ \frac{2^r}{ m } } \log^{1/\alpha}\Big( \frac{en}{2^r} \Big) \ODN_{2^r}  \\
	&\leq c_7 c_5 u \sqrt{ \frac{2^s}{ m } }  \log^{1/\alpha}\Big( \frac{en}{2^s} \Big) \ODN_{2^s}
	= :u \cdot (\ast).
	\end{align*}
	Let the constant $c_4$ be large enough to ensure that $c_4c_6\geq 4$.
	Thus, using tail integration,
	\begin{align}
	\label{eq:EX.beta}
	\Big(\E_{\mathbb{X}} \Big| \frac{\sum_{r=1}^s \mathcal{E}_{I,2^r} }{(\ast)} \Big|^p\Big)^{1/p}
	\leq c_8
	\end{align}
for $p:=2\beta\log n$.

	{\it Step 2.}
 	As  \eqref{eq:EX.beta} holds for any $I\subset\{1,\dots,n\}$, one can integrate over $\eta$ and apply Jensen's inequality (just as in the proof of Corollary \ref{cor:V.probability.estimate}). Hence,
	\[ \Big( \E_{\mathbb{X}} \Big| \frac{ \sum_{r=1}^s \E_\eta \mathcal{E}_{I_\eta,2^r} }{(\ast)} \Big|^p\Big)^{1/p}
	\leq c_8, \]
	and by  Markov's inequality,
	\[ \P_{\mathbb{X}}\Big( \sum_{r=1}^s  \E_\eta \mathcal{E}_{I_\eta,2^r}  \geq c_8 e \cdot   (\ast) \Big)
	\leq \exp(-p)
	= n^{-2\beta}, \]
	as required.
\end{proof}	

\begin{proof}[Proof of Theorem \ref{thm:random.matrix.satisfies.assumption}]
	The proof relies on a self-bounding argument.
	In a first step, following the decoupling argument detailed in Section \ref{sec:decoupling}, for every $1\leq 2^s\leq n$, we obtain
	\begin{align*}
	\nonumber
	(\ODN_{2^s})^2
	&=\max_{\{x \in S^{n-1} : |{\rm supp}(x)| \leq 2^s\}} \Big( \sum_{i=1}^n \frac{ \|X_i\|_2^2 }{ m } x_i^2 + \sum_{i,j=1, \, i\neq j}^m x_ix_j \frac{ \langle X_i,X_j\rangle }{ m } \Big)\\
	\nonumber
	&\leq \max_{1 \leq i \leq n}\frac{\|X_i\|_2^2}{ m } + 4 \OD_{2^s}.
	\end{align*}
	The assumption that $X$ is suitable implies that, with $\P_{\mathbb{X} }$-probability at least $1-\gamma$, for every $1\leq 2^s\leq n$,
	\begin{align}
	\label{eq:alpha.1}
	(\ODN_{2^s})^2
	&\leq 2 + 4\OD_{2^s}.
	\end{align}
	
	Next, observe that
	\[ |\{ s: 1\leq  2^s\leq n\}|\leq \log n
	\quad\text{and}\quad
		(\log n ) n^{-2\beta}\leq n^{-\beta}.\]
	Combining Lemma \ref{lem:estimate.alpha.2s} and the union bound, there is a constant $c_2=c_2(\alpha,\beta,L)$, such that, with $\P_{\mathbb{X}}$-probability at least $1-n^{-\beta}$,
	\begin{align}
	\label{eq:alpha}
	\OD_{2^s}
	\leq c_2 \sqrt{ \frac{2^s}{m} } \log^{1/\alpha}\Big( \frac{en}{2^s} \Big) \ODN_{2^s}
	\end{align}
	for all  $1\leq 2^s\leq n$.

	Thus, on the intersection of the events on which \eqref{eq:alpha.1} and \eqref{eq:alpha} hold (which has $\P_{\mathbb{X}}$-probability at least $1-\gamma-n^{-\beta}$) we conclude that
	\[ \ODN_{2^s}
	\leq  \max\Big\{ \sqrt{2} , c_2 \sqrt{ \frac{ 2^s }{ m } } \log^{1/\alpha}\Big( \frac{en}{2^s} \Big) \Big\}\]
	for all $1\leq 2^s\leq n$.
	Using this estimate for $\ODN_{2^s}$ in \eqref{eq:alpha} completes the proof.	
\end{proof}

\bibliographystyle{plain}
%\bibliography{references}

\begin{thebibliography}{10}

\bibitem{MR2601042}
Radosław Adamczak, Alexander~E. Litvak, Alain Pajor, and Nicole
  Tomczak-Jaegermann.
\newblock Quantitative estimates of the convergence of the empirical covariance
  matrix in log-concave ensembles.
\newblock {\em J. Amer. Math. Soc.}, 23(2):535--561, 2010.

\bibitem{AnttilaBallPerissinaki}
	Milla Anttila, Keith Ball, and Irini Perissinaki.
\newblock The Central Limit Problem for Convex Bodies.
\newblock {\em Transactions of the American Mathematical Society}, 355(12):4723--4735, 2003.

\bibitem{MR3331351}
Shiri Artstein-Avidan, Apostolos Giannopoulos, and Vitali~D. Milman.
\newblock {\em Asymptotic geometric analysis. {P}art {I}}, volume 202 of {\em
  Mathematical Surveys and Monographs}.
\newblock American Mathematical Society, Providence, RI, 2015.

\bibitem{MR1959791}
Sergey Bobkov.
\newblock On concentration of distributions of random weighted sums.
\newblock {\em Ann. Probab.}, 31(1):195--215, 2003.

\bibitem{MR3185453}
Silouanos Brazitikos, Apostolos Giannopoulos, Petros Valettas, and
  Beatrice-Helen Vritsiou.
\newblock {\em Geometry of isotropic convex bodies}, volume 196 of {\em
  Mathematical Surveys and Monographs}.
\newblock American Mathematical Society, Providence, RI, 2014.

\bibitem{MR4244847}
Yuansi Chen.
\newblock An almost constant lower bound of the isoperimetric coefficient in
  the {KLS} conjecture.
\newblock {\em Geom. Funct. Anal.}, 31(1):34--61, 2021.

\bibitem{MR751274}
Persi Diaconis and David Freedman.
\newblock Asymptotics of graphical projection pursuit.
\newblock {\em Ann. Statist.}, 12(3):793--815, 1984.

\bibitem{MR2846382}
Olivier Gu\'{e}don and Emanuel Milman.
\newblock Interpolating thin-shell and sharp large-deviation estimates for
  isotropic log-concave measures.
\newblock {\em Geom. Funct. Anal.}, 21(5):1043--1068, 2011.

\bibitem{MR737400}
William~B. Johnson and Joram Lindenstrauss.
\newblock Extensions of {L}ipschitz mappings into a {H}ilbert space.
\newblock In {\em Conference in modern analysis and probability ({N}ew {H}aven,
  {C}onn., 1982)}, volume~26 of {\em Contemp. Math.}, pages 189--206. Amer.
  Math. Soc., Providence, RI, 1984.

\bibitem{MR2285748}
Bo'az Klartag.
\newblock A central limit theorem for convex sets.
\newblock {\em Invent. Math.}, 168(1):91--131, 2007.

\bibitem{MR3565471}
Shahar Mendelson.
\newblock Upper bounds on product and multiplier empirical processes.
\newblock {\em Stochastic Process. Appl.}, 126(12):3652--3680, 2016.

\bibitem{mendelson2021column}
Shahar Mendelson.
\newblock Column randomization and almost-isometric embeddings.
\newblock {\em arXiv preprint arXiv:2103.05237}, 2021.

\bibitem{mendelson2014singular}
Shahar Mendelson and Grigoris Paouris.
\newblock On the singular values of random matrices.
\newblock {\em Journal of the European Mathematical Society}, 16(4):823--834,
  2014.

\bibitem{MR0293374}
Vitali~D. Milman.
\newblock A new proof of {A}. {D}voretzky's theorem on cross-sections of convex
  bodies.
\newblock {\em Funkcional. Anal. i Prilo\v{z}en.}, 5(4):28--37, 1971.

\bibitem{MR1036275}
Gilles Pisier.
\newblock {\em The volume of convex bodies and {B}anach space geometry},
  volume~94 of {\em Cambridge Tracts in Mathematics}.
\newblock Cambridge University Press, Cambridge, 1989.

\bibitem{MR517198}
Vladimir Sudakov.
\newblock Typical distributions of linear functionals in finite-dimensional
  spaces of high dimension.
\newblock {\em Dokl. Akad. Nauk SSSR}, 243(6):1402--1405, 1978.

\bibitem{talagrand2014upper}
Michel Talagrand.
\newblock {\em Upper and lower bounds for stochastic processes: modern methods
  and classical problems}, volume~60.
\newblock Springer Science \& Business Media, 2014.

\bibitem{MR3872323}
Konstantin Tikhomirov.
\newblock Sample covariance matrices of heavy-tailed distributions.
\newblock {\em Int. Math. Res. Not. IMRN}, (20):6254--6289, 2018.

\end{thebibliography}

\vspace{1em}
\noindent
\textsc{Acknowledgements:}
Daniel Bartl is grateful for financial support through the Vienna Science and Technology Fund (WWTF) project MA16-021 and the Austrian Science Fund (FWF) project ESP-31N and project P28661.

\end{document}